\newtheorem{thm}{Theorem}[section]
\newtheorem{define}[thm]{Definition}
\newtheorem{rem}[thm]{Remark}
\newtheorem{lemma}[thm]{Lemma}
\numberwithin{equation}{section}
\begin{document}
\bigskip

\centerline{\Large\bf  Global smooth solution to the 2D Boussinesq }
\smallskip

\centerline{\Large\bf  equations with fractional dissipation }

\bigskip

\centerline{Zhuan Ye}

\medskip

\centerline{ School of Mathematical Sciences, Beijing Normal University,}
\medskip

\centerline{Laboratory of Mathematics and Complex Systems, Ministry of Education,}
\medskip

\centerline{Beijing 100875, People's Republic of China}

\medskip

\centerline{E-mail: \texttt{yezhuan815@126.com }}
\medskip

\centerline{\texttt{Tel.: +86 10 58807735; fax: +86 10 58808208.}}

\bigskip
\bigskip

{\bf Abstract:}~~%
In this paper, we consider the two-dimensional (2D) incompressible Boussinesq system
with fractional Laplacian dissipation and thermal diffusion. Based on the previous works and some new observations, we show that the condition $1-\alpha
<\beta<\min\Big\{3-3\alpha,\,\,\frac{\alpha}{2},\,\,
\frac{3\alpha^{2}+4\alpha-4}{8(1-\alpha)}\Big\}$ with
$0.7351\approx\frac{10-2\sqrt{10}}{5}<\alpha<1$ suffices in order for the solution pair of velocity and temperature to remain smooth for all time.

{\vskip 1mm
 {\bf AMS Subject Classification 2010:}\quad 35Q35; 35B65; 76D03.

 {\bf Keywords:}
2D Boussinesq equations; Fractional dissipation; Global regularity.}

\vskip .4in
\section{Introduction}\label{intro}
In this paper, we consider the following Cauchy problem of the 2D incompressible Boussinesq equations with the fractional Laplacian dissipation
\begin{equation}\label{Bouss}
\left\{\aligned
&\partial_{t}u+(u \cdot \nabla) u+ \Lambda^{\alpha}u+\nabla p=\theta e_{2},\,\,\,\,\,\,\,\,x\in \mathbb{R}^{2},\,\,t>0, \\
&\partial_{t}\theta+(u \cdot \nabla) \theta+ \Lambda^{\beta}\theta=0, \,\,\,\,\,\,\qquad\qquad x\in \mathbb{R}^{2},\,\,t>0,\\
&\nabla\cdot u=0, \,\,\,\,\,\qquad \qquad \qquad \qquad \qquad \quad x\in \mathbb{R}^{2},\,\,t>0,\\
&u(x, 0)=u_{0}(x),  \quad \theta(x,0)=\theta_{0}(x),\,\,\,\quad x\in \mathbb{R}^{2},
\endaligned\right.
\end{equation}
where $u(x,\,t)=(u_{1}(x,\,t),\,u_{2}(x,\,t))$ is a vector field denoting
the velocity, $\theta=\theta(x,\,t)$ is a scalar function denoting
the temperature in the content of thermal convection and the density
in the modeling of geophysical fluids, $p$ the scalar pressure and
$e_{2}=(0,\,1)$. Here the numbers $\alpha\in [0,\,2]$ and $\beta\in [0,\,2]$ are real parameters. The fractional Laplacian operator $\Lambda^{\alpha}$,
$\Lambda:=(-\Delta)^{\frac{1}{2}}$ denotes the Zygmund operator which is defined through the Fourier transform, namely $$\widehat{\Lambda^{\alpha}
f}(\xi)=|\xi|^{\alpha}\hat{f}(\xi),\qquad \hat{f}(\xi)=\frac{1}{(2\pi)^{2}}\int_{\mathbb{{R}}^{2}}{e^{-ix\cdot\xi}f(x)\,dx}.$$
The fractional Lapacian models many physical phenomena such as overdriven detonations in gases \cite{CLA}. It is also used in some mathematical models in hydrodynamics, molecular biology and finance mathematics,
see for instance \cite{DI}.
We make the convention that by $\alpha=0$ we mean
that there is no dissipation in the velocity equation, and similarly $\beta=0$ means that there is no dissipation in the temperature equation.

The classical 2D Boussinesq equations (i.e.,
$\alpha=\beta=2$) model geophysical flows such as atmospheric fronts
and oceanic circulation, and play an important role in the study of
Rayleigh-Benard convection (see for example \cite{MB,PG} and
references therein). Moreover, there are some geophysical
circumstances related to the Boussinesq equations with fractional
Laplacian (see \cite{Cap,PG} for details). The Boussinesq equations
with fractional Laplacian also closely related equations such as the
surface quasi-geostrophic equation model important geophysical
phenomena (see, e.g., \cite{Constantin}).

\vskip .2in
The standard 2D Boussinesq
equations and their fractional Laplacian generalizations have
attracted considerable attention recently due to their physical
applications and mathematical significance.
On the one hand, when $\alpha=2$ and $\beta=2$, the system
(\ref{Bouss}) becomes the standard 2D Boussinesq equations whose global regularity has been proved previously (see, e.g., \cite{Can}). On the other
hand, the fundamental
issue of whether classical solutions to the totally inviscid Boussinesq equations (i.e., $\alpha=\beta=0$) can develop finite time singularities remains outstandingly open (except
if $\theta_{0}$ is a constant, of course). Therefore, it is very interesting to consider the intermediate cases, that is the fractional dissipation.
We hope that the study of the intermediate cases may shed light on this extremely challenging problem. Almost at the same time, Chae
\cite{C1} and  Hou-Li \cite{HL} successfully established the global regularity to the system (\ref{Bouss}) with $\alpha=2$ and
$\beta=0$ or $\alpha=0$ and $\beta=2$, independently. Xu \cite{XX} proved the global unique solution of the system (\ref{Bouss}) with $\alpha+\beta=2$ and $1\leq\alpha<2$.  For the cases with weaker dissipation, i.e. when $\alpha=0$, $1<\beta<2$ or $1<\alpha<2$, $\beta=0$, the corresponding system (\ref{Bouss}) should be globally well-posed (see \cite{HZ10} and \cite{HK07}, respectively). By deeply
developing the new structures of the coupling system, Hmidi, Keraani and
Rousset \cite{HK3,HK4} were able to establish the global
well-posedness result to the system (\ref{Bouss}) with two special
critical case, namely $\alpha=1$ and $\beta=0$ or $\alpha=0$ and $\beta=1$. The more general critical case $\alpha+\beta=1$ with
$0<\alpha,\,\beta<1$ is extremely difficult. Very recently,  the
global regularity of the general critical case $\alpha+\beta=1$ with
$\alpha>\frac{23-\sqrt{145}}{12}\thickapprox 0.9132$ and $0<\beta<1$
was recently examined by Jiu, Miao, Wu and Zhang \cite{JMWZ}. This
result was further improved by Stefanov and Wu \cite{SW}, which
requires $\alpha+\beta=1$ with
$\alpha>\frac{\sqrt{1777}-23}{24}\thickapprox 0.7981$ and
$0<\beta<1$ (see also \cite{WXXY} for further improvement). Now we want to mention some results concerning the subcritical
ranges, namely $\alpha+\beta>1$ with $0<\alpha,\,\beta<1$.
In fact, the
global regularity of (\ref{Bouss}) in the subcritical
ranges is also definitely nontrivial and
quite difficult. To the best of our knowledge there are
only a few works concerning the subcritical cases, see
\cite{CV,MX,YJW,YeAp,YX201501,YX201502,YXX}. We note that not
all subcritical cases have been resolved.
As a rule of thumb, with current methods it seems impossible to obtain
the global regularity for the 2D Boussinesq equations in the supercritical cases, namely $\alpha+\beta<1$ with $\alpha,\,\beta>0$.
Recently, the eventual regularity of weak solutions of the system (\ref{Bouss}) when
$\alpha$ and $\beta$ are in the suitable supercritical range has been proven (see \cite{JWYang} and \cite{WXXY}). For many other interesting
recent results on the Boussinesq equations, we refer to e.g.
\cite{ACSWXY,ACWX,CW,CW2,ChaeCW,D,DP3,Hmidi2011,JPL,JMWZ,KRTW,LaiPan,LLT,WuXu,WXY,YZ} and in the references therein (the list
with no intention to be complete).

\vskip .2in
To complement and improve the existing results described above,
the goal of this paper is to study the case $1-\alpha<\beta<f(\alpha)$ and see how much
$\alpha>0$ may be reduced while preserving the global regularity result.
The previous three works \cite{MX,YXX,YeAp} required $\alpha>\widetilde{\alpha_{0}}\approx0.7796$ while in this paper we show that $\alpha>\frac{10-2\sqrt{10}}{5}\approx0.7351$ suffices.
More precisely, our main result reads as follows.
\begin{thm}\label{Th1} Let $0.7351\approx\frac{10-2\sqrt{10}}{5}<\alpha<1$ and $1-\alpha<\beta<f(\alpha)$,
where
\begin{eqnarray}\label{NR}f(\alpha)=\min\Big\{3-3\alpha,\,\,\frac{\alpha}{2},\,\,
\frac{3\alpha^{2}+4\alpha-4}{8(1-\alpha)}\Big\}.
\end{eqnarray}
Let $(u_{0}, \theta_{0})
\in H^{\sigma}(\mathbb{R}^{2})\times H^{\sigma}(\mathbb{R}^{2})$ for $\sigma>2$,
then the system (\ref{Bouss}) admits a unique global solution such that for any given
$T>0$
$$u\in C([0, T]; H^{\sigma}(\mathbb{R}^{2}))\cap L^{2}([0, T]; H^{\sigma+\frac{\alpha}{2}}(\mathbb{R}^{2})),$$
$$\theta\in C([0, T]; H^{\sigma}(\mathbb{R}^{2}))\cap L^{2}([0, T];
H^{\sigma+\frac{\beta}{2}}(\mathbb{R}^{2})).$$
\end{thm}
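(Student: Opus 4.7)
The plan is to prove the theorem by the familiar two-stage scheme: local well-posedness in $H^{\sigma}(\mathbb{R}^{2})\times H^{\sigma}(\mathbb{R}^{2})$ is classical, so the content lies in producing, on any given $[0,T]$, an a priori bound preventing blow-up of the $H^{\sigma}$ norm. By a Beale-Kato-Majda type continuation criterion adapted to fractional dissipation, it suffices to obtain $\int_{0}^{T}\|\nabla u(\cdot,t)\|_{L^{\infty}}\,dt<\infty$ (or a logarithmic substitute), which in turn reduces, via Biot-Savart together with a logarithmic Sobolev embedding, to a bound on $\|\omega\|_{L^{p}}$ for some $p$ large. I would begin by collecting the basic estimates: the $L^{2}$ energy identity for $u$ gives $\|u\|_{L^{\infty}_{T}L^{2}}+\|\Lambda^{\alpha/2}u\|_{L^{2}_{T}L^{2}}\le C$, while the transport-diffusion structure with $\Lambda^{\beta}\ge 0$ yields the Cord\'oba-Cord\'oba/Ju maximum principle $\|\theta(\cdot,t)\|_{L^{p}}\le\|\theta_{0}\|_{L^{p}}$ for all $1\le p\le\infty$, as well as the smoothing bound $\|\Lambda^{\beta/2}\theta\|_{L^{2}_{T}L^{2}}\le C$.

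The heart of the proof is the analysis of the vorticity $\omega=\partial_{1}u_{2}-\partial_{2}u_{1}$, which obeys $\partial_{t}\omega+u\cdot\nabla\omega+\Lambda^{\alpha}\omega=\partial_{1}\theta$. Following Hmidi-Keraani-Rousset, I would introduce the modified vorticity $G:=\omega-\Lambda^{-\alpha}\partial_{1}\theta$, which satisfies
\begin{equation*}
\partial_{t}G+u\cdot\nabla G+\Lambda^{\alpha}G=[\Lambda^{-\alpha}\partial_{1},\,u\cdot\nabla]\theta+\Lambda^{\beta-\alpha}\partial_{1}\theta.
\end{equation*}
The bad forcing $\partial_{1}\theta$ is thus traded for a commutator that formally gains one derivative and a linear term of order $1+\beta-\alpha$ in $\theta$. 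I would then pursue an $L^{p}$ bound on $G$: multiplying by $|G|^{p-2}G$, invoking the positivity lemma for $\Lambda^{\alpha}$, and expanding the commutator paradifferentially produce terms that must be absorbed using the parabolic smoothing on $\theta$ and the $\Lambda^{\alpha/2}$-smoothing on $u$, via the Calder\'on-Zygmund identity $\nabla u\sim\omega=G+\Lambda^{-\alpha}\partial_{1}\theta$. The three constraints in $f(\alpha)$ are precisely the admissibility conditions for the interpolation exponents in this estimate.

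Once $\|G\|_{L^{\infty}_{T}L^{p}}$ is controlled for some sufficiently large $p$, the splitting $\omega=G+\Lambda^{-\alpha}\partial_{1}\theta$ together with Sobolev embedding (using $\|\Lambda^{\beta/2}\theta\|_{L^{2}_{T}L^{2}}$ and the $L^{p}$ maximum principle) delivers $\omega\in L^{\infty}_{T}L^{p}$, and hence $\nabla u\in L^{1}_{T}L^{\infty}$ via a logarithmic interpolation. Propagation of the $H^{\sigma}$ regularity is then standard: an energy estimate on $\Lambda^{\sigma}(u,\theta)$ combined with the Kato-Ponce commutator bound and Gronwall yields $(u,\theta)\in L^{\infty}_{T}H^{\sigma}$ together with the parabolic gains $L^{2}_{T}H^{\sigma+\alpha/2}\times L^{2}_{T}H^{\sigma+\beta/2}$. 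Uniqueness in this class follows from a standard $L^{2}$ difference estimate exploiting $\alpha,\beta>0$.

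The principal obstacle, and the locus of the improvement over the previous threshold $\widetilde{\alpha_{0}}\approx 0.7796$, is the sharp balance of exponents in the $L^{p}$ estimate for $G$. The bounds $\beta<3-3\alpha$ and $\beta<\alpha/2$ reflect admissibility of Sobolev embeddings and of the interpolation between $L^{p}$ and $\Lambda^{\alpha/2}$-energy. The subtle inequality $\beta<\tfrac{3\alpha^{2}+4\alpha-4}{8(1-\alpha)}$ arises from simultaneously balancing the derivative loss in the commutator $[\Lambda^{-\alpha}\partial_{1},u\cdot\nabla]\theta$, the dissipative gain $\Lambda^{\alpha}$ on $G$, and the smoothing $\Lambda^{\beta}$ on $\theta$; extracting it optimally is the new observation the proof must implement. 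Compatibility of this third bound with $\beta>1-\alpha$ reduces to $5\alpha^{2}-20\alpha+12<0$, whose smaller root is exactly $\tfrac{10-2\sqrt{10}}{5}$, explaining the lower cutoff on $\alpha$. Carrying out this optimized interpolation while keeping all commutator contributions in closable form is the step I expect to be the main technical difficulty.
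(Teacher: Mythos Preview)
Your overall architecture is correct and matches the paper: local theory plus a continuation criterion, basic energy/maximum-principle bounds, the Hmidi--Keraani--Rousset combined quantity $G=\omega-\Lambda^{-\alpha}\partial_{1}\theta$, and an $L^{p}$ estimate on $G$ as the engine. But there is a genuine gap in how you pass from that $L^{p}$ bound to $\nabla u\in L^{1}_{T}L^{\infty}$, and it is exactly the step that determines the threshold $\alpha>\tfrac{10-2\sqrt{10}}{5}$.

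The $L^{m}$ estimate on $G$ does \emph{not} close for ``$p$ sufficiently large''. The admissibility constraints on the exponents in the commutator and interpolation force $m<\min\{4,\,1/(1-\alpha)\}$ (together with further inequalities involving $\alpha,\beta$). For $\alpha$ near the stated lower cutoff this cap is below $4$, so you only get $G\in L^{\infty}_{T}L^{m}$ for a moderate $m$. That is not enough for any ``logarithmic interpolation'' to produce $\nabla u\in L^{1}_{T}L^{\infty}$: a BKM-type closure needs $\omega$ in $L^{1}_{T}B^{0}_{\infty,1}$ (or an equivalent endpoint space), not merely in $L^{\infty}_{T}L^{m}$ for finite $m$. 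Moreover, your claim that $\omega=G+\Lambda^{-\alpha}\partial_{1}\theta\in L^{\infty}_{T}L^{p}$ follows from Sobolev embedding is optimistic: $\Lambda^{1-\alpha}\theta$ is not in $L^{\infty}_{T}L^{p}$ from the maximum principle alone; one needs the \emph{time-integrated} smoothing of the $\theta$-equation.

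The paper closes this gap by a Besov-space bootstrap. From $G\in L^{\infty}_{T}L^{m}\cap L^{m}_{T}L^{2m/(2-\alpha)}$ one first deduces $\omega\in L^{1}_{T}L^{r}$ for a range of $r$, using the transport-diffusion smoothing estimate for $\theta$ (this is where $\mathcal{R}_{\alpha}\theta$ is controlled in time-integrated form). Then one runs a localized $L^{r}$ energy estimate on $\Delta_{k}G$, exploiting the exponential decay $e^{-ct2^{\alpha k}}$ from the dissipation, to obtain $G\in L^{1}_{T}B^{2/r}_{r,1}\hookrightarrow L^{1}_{T}B^{0}_{\infty,1}$, provided $2\alpha-1-\tfrac{2}{r}>0$ and $r\le \tfrac{2m}{2-\alpha}$. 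These two requirements force the additional lower bound $m>\tfrac{2}{\alpha}$. It is the \emph{combination} of this lower bound with the upper constraints from the $L^{m}$ estimate that, upon setting $m=\tfrac{2}{\alpha}$, produces the three pieces of $f(\alpha)$ and the compatibility condition $1-\alpha<\tfrac{3\alpha^{2}+4\alpha-4}{8(1-\alpha)}$, i.e.\ $\alpha>\tfrac{10-2\sqrt{10}}{5}$. So the sharp threshold does not come solely from the $L^{p}$-on-$G$ step, as you suggest, but from matching that step against the Besov maximal-regularity step you have omitted.
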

\begin{rem}\rm
Combining the previous three works \cite{MX,YXX,YeAp}, the global well-posedness result of the system (\ref{Bouss}) was established under the
the condition $1-\alpha<\beta<f(\alpha)$ for $\alpha>\widetilde{\alpha_{0}}\thickapprox0.7796$,
where $f(\alpha)<1$ is an explicit function as a technical
bound. Hence, this present result
can be regarded as a further improvement of the results in \cite{MX,YXX,YeAp}.
\end{rem}

\begin{rem}\rm
We want to point out that due to the technical reasons, the $\beta$ is smaller than a complicated explicit function. Indeed, it is strongly
believed that the diffusion term is always good term and the larger the
power $\beta$ is, the better effects it produces. Therefore,
we conjecture that the above theorem should hold
for all the cases $\frac{10-2\sqrt{10}}{5}<\alpha<1$ and $1-\alpha<\beta<1$.
\end{rem}

The rest part of this paper is organized as follows. In Sect.2, we collect some preliminaries materials, including the Littlewood-Paley decomposition, the definition of Besov spaces and
some useful lemmas. In Sect. 3, we give the proof of Theorem \ref{Th1}.
In the Appendix, we give the details about the fact that a choice of $p$ is possible.

\vskip .3in
\section{Preliminaries}
\setcounter{equation}{0}
In this section, we are going to recall some basic facts on the Littlewood-Paley theory, the
definition of Besov space and some useful lemmas. Now we recall the so-called
Littlewood-Paley operators and their elementary properties which
allow us to define the Besov spaces (see for example \cite{BCD,MiaoWZ}).
Let $(\chi, \varphi)$ be a couple of smooth
functions with values in $[0, 1]$ such that $\chi\in
C_{0}^{\infty}(\mathbb{R}^{n})$ is supported in the ball
$\mathcal{B}:= \{\xi\in \mathbb{R}^{n}, |\xi|\leq \frac{4}{3}\}$,
$\rm{\varphi\in C_{0}^{\infty}(\mathbb{R}^{n})}$ is supported in the annulus
$\mathcal{C}:= \{\xi\in \mathbb{R}^{n}, \frac{3}{4}\leq |\xi|\leq
\frac{8}{3}\}$ and satisfy
$$\chi(\xi)+\sum_{j\in \mathbb{N}}\varphi(2^{-j}\xi)=1, \quad  \forall \xi\in \mathbb{R}^{n};\qquad\sum_{j\in \mathbb{Z}}\varphi(2^{-j}\xi)=1,  \  \forall  \xi \in \mathbb{R}^{n}\setminus \{0 \}.$$
For every $ u\in S'$ (tempered distributions)  we define the
non-homogeneous Littlewood-Paley operators as follows,
$$\Delta_{j}u=0\,\,\,\mbox{for}\,\, j\leq -2; \ \quad \Delta_{-1}u=\chi(D)u=\mathcal{F}^{-1}(\chi(\xi)\widehat{u}(\xi)), $$
$$ \Delta_{j}u=\varphi(2^{-j}D)u=\mathcal{F}^{-1}(\varphi(2^{-j}\xi)\widehat{u}(\xi)),\,\,\forall j\in \mathbb{N}.
$$
Meanwhile, we define the homogeneous dyadic blocks as
$$\dot{\Delta}_{j}u=\varphi(2^{-j}D)u=\mathcal{F}^{-1}(\varphi(2^{-j}\xi)\widehat{u}(\xi)),  \ \quad\forall j\in \mathbb{Z}.$$
Let us recall the definition of homogeneous and inhomogeneous Besov spaces through
the dyadic decomposition.
\begin{define}
Let $s\in \mathbb{R}, (p,r)\in[1,+\infty]^{2}$. The homogeneous
Besov space $\dot{B}_{p,r}^{s}$ and inhomogeneous
Besov space $B_{p,r}^{s}$ are defined as a space of $f\in
S'(\mathbb{R}^{n})$ such that
$$ \dot{B}_{p,r}^{s}=\{f\in S'(\mathbb{R}^{n}), \|f\|_{\dot{B}_{p,r}^{s}}<\infty\},\quad B_{p,r}^{s}=\{f\in S'(\mathbb{R}^{n}),  \|f\|_{B_{p,r}^{s}}<\infty\},$$
where
\begin{equation}\label{1}\nonumber
 \|f\|_{\dot{B}_{p,r}^{s}}=\left\{\aligned
&\Big(\sum_{j\in \mathbb{Z}}2^{jrs}\|\dot{\Delta}_{j}f\|_{L^{p}}^{r}\Big)^{\frac{1}{r}}, \quad \forall \ r<\infty,\\
&\sup_{j\in \mathbb{Z}}
2^{js}\|\dot{\Delta}_{j}f\|_{L^{p}}, \quad \forall \ r=\infty,\\
\endaligned\right.
\end{equation}
and
\begin{equation}\label{1}\nonumber
 \|f\|_{B_{p,r}^{s}}=\left\{\aligned
&\Big(\sum_{j\geq-1}2^{jrs}\|\Delta_{j}f\|_{L^{p}}^{r}\Big)^{\frac{1}{r}}, \quad \forall \ r<\infty,\\
&\sup_{j\geq-1}
2^{js}\|\Delta_{j}f\|_{L^{p}}, \quad \forall \ r=\infty.\\
\endaligned\right.
\end{equation}
\end{define}
For $s>0,(p,r)\in[1,+\infty]^{2}$, we have the following fact
$$\|f\|_{ {B}_{p,r}^{s}}= \|f\|_{L^{p}}+\| f\|_{ \dot{B}_{p,r}^{s}}.$$
We shall also need the mixed space-time spaces
$$ \|f\|_{L_{T}^{\rho}B_{p,r}^{s}}:=
\Big\|(2^{js}\|\Delta_{j}f\|_{L^{p}})_{l_{j}^{r}}\Big\|_{L_{T}^{\rho}}$$
and
$$ \|f\|_{\widetilde{L}_{T}^{\rho}B_{p,r}^{s}}:=
(2^{js}\|\Delta_{j}f\|_{L_{T}^{\rho}L^{p}})_{l_{j}^{r}}.$$
The following links are direct consequence of
the Minkowski inequality
$$L_{T}^{\rho}B_{p,r}^{s}\hookrightarrow \widetilde{L}_{T}^{\rho}B_{p,r}^{s},\qquad \mbox{if}\,\,r\geq \rho, \quad \mbox{and} \quad \widetilde{L}_{{T}}^{\rho}B_{p,r}^{s}\hookrightarrow {L}_{T}^{\rho}B_{p,r}^{s},\qquad \mbox{if}\,\,\rho\geq r.$$
In particular,
$$\widetilde{L}_{{T}}^{r}B_{p,r}^{s}\thickapprox {L}_{T}^{r}B_{p,r}^{s}.$$

Bernstein inequalities are fundamental in the analysis involving
Besov spaces and these inequalities trade integrability for
derivatives.

\begin{lemma} [see \cite{BCD}]\label{lem22}
  Let $k\geq0, 1\leq a\leq b\leq\infty$. Assume that
$$
\mbox{supp}\, \widehat{f} \subset \{\xi\in \mathbb{R}^n: \,\, |\xi|
\lesssim  2^j \},
$$
for some integer $j$,  then there exists a constant $C_1$ such that
$$
\|\Lambda^{k} f\|_{L^b} \le C_1\, 2^{j k  +
jn(\frac{1}{a}-\frac{1}{b})} \|f\|_{L^a}.
$$
If $f$ satisfies
\begin{equation*}\label{spp}
\mbox{supp}\, \widehat{f} \subset \{\xi\in \mathbb{R}^n: \,\,|\xi|
\thickapprox 2^j \}
\end{equation*}
for some integer $j$, then
$$
C_1\, 2^{ j k} \|f\|_{L^b } \le \|\Lambda^{k} f\|_{L^b } \le
C_2\, 2^{  j k + j n(\frac{1}{a}-\frac{1}{b})} \|f\|_{L^a},
$$
where $C_1$ and $C_2$ are constants depending on $\alpha,p$ and $q$
only.
\end{lemma}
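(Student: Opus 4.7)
The plan is to prove both estimates by the standard strategy of writing $f$ (resp.\ $\Lambda^{k}f$) as the convolution of itself against an explicit smooth kernel and then applying Young's convolution inequality together with dyadic scaling. The key observation is that if $\widehat{f}$ is supported in a frequency ball of radius $\sim 2^{j}$ (resp.\ in an annulus of radii $\sim 2^{j}$) then the relevant Fourier multiplier can be chosen compactly supported and its inverse Fourier transform scales predictably with $j$.

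For the ball case, fix once and for all a radial $\chi_{0}\in C_{c}^{\infty}(\mathbb{R}^{n})$ with $\chi_{0}\equiv 1$ on $\mathcal{B}=\{|\xi|\le 4/3\}$ and $\mathrm{supp}\,\chi_{0}\subset\{|\xi|\le 3/2\}$. By the support assumption $\widehat{f}=\chi_{0}(2^{-j}\cdot)\widehat{f}$, so $\Lambda^{k}f=K_{j}\ast f$ with
\begin{equation*}
K_{j}:=\mathcal{F}^{-1}\!\bigl(|\xi|^{k}\chi_{0}(2^{-j}\xi)\bigr),\qquad K_{j}(x)=2^{j(n+k)}K_{0}(2^{j}x),\qquad K_{0}:=\mathcal{F}^{-1}(|\xi|^{k}\chi_{0}).
\end{equation*}
Choose $c\in[1,\infty]$ by the Young relation $1+\tfrac{1}{b}=\tfrac{1}{c}+\tfrac{1}{a}$, equivalently $\tfrac{1}{a}-\tfrac{1}{b}=1-\tfrac{1}{c}$. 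A direct change of variables gives $\|K_{j}\|_{L^{c}}=2^{jk+jn(1/a-1/b)}\|K_{0}\|_{L^{c}}$, and Young's inequality $L^{c}\ast L^{a}\hookrightarrow L^{b}$ yields the announced bound with $C_{1}=\|K_{0}\|_{L^{c}}$.

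For the annular statement, replace $\chi_{0}$ by an annular bump $\widetilde{\varphi}\in C_{c}^{\infty}(\mathbb{R}^{n}\setminus\{0\})$ that equals $1$ on $\mathcal{C}=\{3/4\le|\xi|\le 8/3\}$. The upper bound follows exactly as above. For the lower bound, the crucial point is that $|\xi|^{-k}$ is smooth on the support of $\widetilde{\varphi}$, so $H_{0}:=\mathcal{F}^{-1}\!\bigl(\widetilde{\varphi}(\xi)|\xi|^{-k}\bigr)\in\mathcal{S}(\mathbb{R}^{n})$. Setting $H_{j}(x):=2^{j(n-k)}H_{0}(2^{j}x)$, we have $f=H_{j}\ast\Lambda^{k}f$ and $\|H_{j}\|_{L^{1}}=2^{-jk}\|H_{0}\|_{L^{1}}$. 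Young's inequality $L^{1}\ast L^{b}\hookrightarrow L^{b}$ then gives $\|f\|_{L^{b}}\le 2^{-jk}\|H_{0}\|_{L^{1}}\|\Lambda^{k}f\|_{L^{b}}$, which is equivalent to the claimed lower bound.

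The only conceptual subtlety is ensuring that $K_{0}$ and $H_{0}$ actually belong to the Lebesgue spaces required by Young's inequality, so that the implicit constants $C_{1},C_{2}$ are finite. This is automatic for $H_{0}$ since $\widetilde{\varphi}\cdot|\cdot|^{-k}\in C_{c}^{\infty}(\mathbb{R}^{n}\setminus\{0\})$ forces $H_{0}\in\mathcal{S}(\mathbb{R}^{n})$. For $K_{0}$ one uses that $|\xi|^{k}\chi_{0}(\xi)$ is compactly supported and continuous, hence in $L^{1}$, so $K_{0}\in L^{\infty}$; moreover a standard integration-by-parts argument away from the origin gives enough polynomial decay of $K_{0}$ at infinity to place it in every $L^{c}$ with $c\in[1,\infty]$. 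Since none of these bookkeeping constants depend on $j$, they are absorbed into $C_{1}$ and $C_{2}$, concluding the proof.
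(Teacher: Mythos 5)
Your proof is correct and is exactly the standard argument behind this lemma: the paper gives no proof of its own (it simply cites \cite{BCD}), and the proof there is precisely your convolution--rescaling--Young scheme, including the kernel $H_{j}$ with symbol $\widetilde{\varphi}(2^{-j}\xi)|\xi|^{-k}$ for the reverse inequality on the annulus. The only point deserving the care you already flag is the integrability of $K_{0}=\mathcal{F}^{-1}(|\xi|^{k}\chi_{0})$ when $k$ is not an even integer (the symbol is singular at the origin), and the dyadic/integration-by-parts decay bound $|K_{0}(x)|\lesssim(1+|x|)^{-n-k}$ you invoke does settle it.
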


To prove the theorem, we need the following commutator
estimate involving $\mathcal
{R}_{\alpha}:=\partial_{x}\Lambda^{-\alpha}$, which was established by Stefanov and Wu \cite{SW}.
\begin{lemma}\label{Lem23}
Assume that $\frac{1}{2}<\alpha<1$ and $1<p_{2}<\infty$,
$1<p_{1},\,p_{3}\leq\infty$ with
$\frac{1}{p_{1}}+\frac{1}{p_{2}}+\frac{1}{p_{3}}=1$. Then for $0\leq
s_{1}<1-\alpha$ and $s_{1}+s_{2}>1-\alpha$, the following holds true
\begin{align}\label{SWt201}
\Big|\int_{\mathbb{R}^{2}}{ F[\mathcal
{R}_{\alpha},\,u_{G}\cdot\nabla]\theta\,dx}\Big|\leq
C\|\Lambda^{s_{1}}\theta\|_{L^{p_{1}}}\|F\|_{W^{s_{2},\,p_{2}}}
\|G\|_{L^{p_{3}}}.\end{align} Similarly, for $0\leq s_{1}<1-\alpha$
and $s_{1}+s_{2}>2-2\alpha$, the following holds true
\begin{align}\label{SWt202}
\Big|\int_{\mathbb{R}^{2}}{ F[\mathcal
{R}_{\alpha},\,u_{\theta}\cdot\nabla]H\,dx}\Big|\leq
C\|\Lambda^{s_{1}}\theta\|_{L^{p_{1}}}\|F\|_{W^{s_{2},\,p_{2}}}
\|H\|_{L^{p_{3}}}.\end{align}
Here and in what follows, $W^{s,\,p}$ denotes the standard Sobolev
space.
\end{lemma}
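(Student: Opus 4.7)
The strategy is to prove (\ref{SWt201}) and (\ref{SWt202}) by combining a duality rearrangement with a Bony-paraproduct decomposition, exploiting that $\mathcal{R}_\alpha=\partial_x\Lambda^{-\alpha}$ is a pseudodifferential operator of positive order $1-\alpha$ (since $\alpha<1$) whose kernel is Calder\'on--Zygmund-like up to a $\Lambda^{-\alpha}$ smoothing.

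First I would use that $u_G$ is divergence-free (being the Biot--Savart velocity $u_G=\nabla^\perp\Delta^{-1}G$) to rewrite
$$[\mathcal{R}_\alpha,u_G\cdot\nabla]\theta=\nabla\cdot\bigl([\mathcal{R}_\alpha,u_G]\theta\bigr),$$
so that after integration by parts the left-hand side of (\ref{SWt201}) becomes $-\int\nabla F\cdot[\mathcal{R}_\alpha,u_G]\theta\,dx$. I would then decompose the bilinear commutator via Bony's paraproduct $u_G\theta=T_{u_G}\theta+T_\theta u_G+R(u_G,\theta)$, producing three pieces to estimate separately: (i) the low-high para-commutator $\sum_j[\mathcal{R}_\alpha,S_{j-1}u_G]\Delta_j\theta$; (ii) the swapped term $\mathcal{R}_\alpha T_\theta u_G-T_{\mathcal{R}_\alpha\theta}u_G$; and (iii) the high-high remainder $\mathcal{R}_\alpha R(u_G,\theta)-R(u_G,\mathcal{R}_\alpha\theta)$. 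For (i), a first-order Taylor expansion of the rescaled convolution kernel of $\mathcal{R}_\alpha\varphi(2^{-j}D)$ transfers a gradient onto the low-frequency factor $u_G$ at the cost of a $2^{j(1-\alpha)}$ from the positive order of $\mathcal{R}_\alpha$; H\"older, Bernstein (Lemma \ref{lem22}), and the Biot--Savart bound $\|u_G\|_{L^{p_3}}\lesssim\|G\|_{L^{p_3}}$ then close the block-wise estimate. Piece (ii) is handled symmetrically after commuting $\mathcal{R}_\alpha$ past the low-frequency factor. Dyadic summation produces factors of order $2^{j(1-\alpha-s_1-s_2)}$ whose convergence requires exactly $s_1+s_2>1-\alpha$, while $s_1<1-\alpha$ is what makes the low-frequency sum carrying $\Lambda^{s_1}\theta$ absolutely convergent.

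For (\ref{SWt202}), the twist is that $u_\theta=\nabla^\perp\Delta^{-1}\theta$ is itself built from $\theta$, so producing a $\Lambda^{s_1}\theta$ factor on the right forces us to absorb a $\Lambda^{-1}$ onto $\theta$ inside each paraproduct piece; repeating the above analysis with this substitution yields the stricter balance $s_1+s_2>2-2\alpha$, the extra $1-\alpha$ reflecting the additional order of $\mathcal{R}_\alpha$ felt by $\theta$ through the Biot--Savart relation.

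The main obstacle I foresee is the high-high remainder (iii), where no frequency gap is available to harvest a commutator cancellation, so one must absorb the full $(1-\alpha)$-derivative loss of $\mathcal{R}_\alpha$ directly via Bernstein (Lemma \ref{lem22}); this is exactly the term that makes the thresholds $s_1+s_2>1-\alpha$ and $s_1+s_2>2-2\alpha$ sharp for the present method. Distributing the three factors among $L^{p_1}$, $L^{p_2}$, $L^{p_3}$ in accordance with $1/p_1+1/p_2+1/p_3=1$ and confirming $\ell^2$ summability across dyadic blocks under $0\le s_1<1-\alpha$ is the remaining book-keeping, carried out by a standard Young-convolution argument.
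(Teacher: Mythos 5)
The paper does not actually prove this lemma: it is imported verbatim from Stefanov and Wu \cite{SW}, so there is no in-paper argument to compare against. Your sketch is essentially a reconstruction of the proof in \cite{SW}: divergence form of the commutator, integration by parts onto $F$, Bony decomposition, a first-order kernel expansion to extract the gain $2^{-j\alpha}$ in the para-commutator, and Bernstein to absorb the order $1-\alpha$ of $\mathcal{R}_{\alpha}$ in the high-high remainder. Your frequency bookkeeping correctly reproduces both thresholds $s_{1}+s_{2}>1-\alpha$ and $s_{1}+s_{2}>2-2\alpha$, and correctly identifies where $s_{1}<1-\alpha$ is needed (domination of the low-frequency sums carrying $\theta$ by their top term). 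One slip to fix: in this paper $u_{\theta}=\nabla^{\perp}\Delta^{-1}\mathcal{R}_{\alpha}\theta$, not $\nabla^{\perp}\Delta^{-1}\theta$; it is precisely the extra operator $\mathcal{R}_{\alpha}$, of order $1-\alpha$, hidden inside $u_{\theta}$ that shifts the threshold from $1-\alpha$ to $2-2\alpha$ in (\ref{SWt202}) --- your subsequent accounting of ``the additional order of $\mathcal{R}_{\alpha}$'' is consistent with this, so the error is in the stated definition rather than in the estimate. The remaining caveats are those expected of a sketch: the endpoints $p_{1},p_{3}=\infty$ require the frequency-localized Biot--Savart bounds rather than Calder\'on--Zygmund theory on $L^{\infty}$, and recovering $\|F\|_{W^{s_{2},p_{2}}}$ after placing a full gradient on $F$ uses $s_{2}\leq 1$ in the blocks where $F$ carries the top frequency (harmless, since $s_{2}<1$ in every application here).
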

The following lemma contains bilinear estimates (see for example \cite{YeAp,YXX}).
\begin{lemma}\label{Lem25}
Let $2<m<\infty$, $0<s<1$ and $p, q, r\in(1, \infty)^{3}$ such that
$\frac{1}{p}=\frac{1}{q}+\frac{1}{r}$, then it holds
\begin{eqnarray}\label{YXXt205}\|\Lambda^{s}(|f|^{m-2}f)\|_{L^{p}}\leq C\|f\|_{\dot{B}_{q,\,p}^{s}}\|f\|_{L^{r(m-2)}}^{m-2},
\end{eqnarray}
\begin{eqnarray}\||f|^{m-2}f\|_{W^{s,p}}\leq C\|f\|_{{B}_{q,\,p}^{s}}\|f\|_{L^{r(m-2)}}^{m-2}.
\end{eqnarray}
\end{lemma}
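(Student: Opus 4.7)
The starting point is the pointwise bound for the nonlinearity. For $m>2$ the map $g(t)=|t|^{m-2}t$ is of class $C^{1}$ with $g'(t)=(m-1)|t|^{m-2}$, so the mean value theorem applied on the segment joining $a$ and $b$ yields
\[
 \bigl||a|^{m-2}a-|b|^{m-2}b\bigr|\le C_{m}\bigl(|a|^{m-2}+|b|^{m-2}\bigr)|a-b|,\qquad a,b\in\mathbb{R}.
\]
This is the only piece that uses the specific form of $g$; everything else is soft harmonic-analysis machinery.

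For the inhomogeneous estimate I would use the Sobolev--Slobodetskii description of $W^{s,p}$ for $0<s<1$ and $1<p<\infty$,
\[
 \|G\|_{\dot W^{s,p}}^{p}\approx \int\!\!\int \frac{|G(x)-G(y)|^{p}}{|x-y|^{n+sp}}\,dx\,dy,
\]
which also serves as the difference characterization of $\dot B_{p,p}^{s}$. Setting $G=|f|^{m-2}f$ and inserting the pointwise bound, apply H\"older with exponents $q$ and $r$ (using $\tfrac{1}{p}=\tfrac{1}{q}+\tfrac{1}{r}$ and the identity $\||f|^{m-2}\|_{L^{r}}=\|f\|_{L^{r(m-2)}}^{m-2}$) to factor out $\|f\|_{L^{r(m-2)}}^{m-2}$ and leave a difference integral of $f$. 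What remains is exactly
\[
 \left(\int \frac{\|f(\cdot+h)-f(\cdot)\|_{L^{q}}^{p}}{|h|^{n+sp}}\,dh\right)^{1/p}\approx \|f\|_{\dot B_{q,p}^{s}},
\]
the standard Besov characterization via first differences valid for $0<s<1$. Adding the trivial H\"older bound $\|G\|_{L^{p}}\le \|f\|_{L^{q}}\|f\|_{L^{r(m-2)}}^{m-2}$ completes the inhomogeneous inequality.

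The homogeneous estimate is handled similarly, but one extra step is needed to pass from $\|G\|_{\dot B_{p,p}^{s}}$ to $\|\Lambda^{s}G\|_{L^{p}}$. For $1<p\le 2$ the embedding $\dot B_{p,p}^{s}\hookrightarrow \dot F_{p,2}^{s}=\dot H^{s,p}$ is immediate and finishes the job. For $p>2$ this embedding fails, and I would instead work directly with the Stein/Strichartz square-function characterization
\[
 \|\Lambda^{s}G\|_{L^{p}}\approx \left\|\left(\int \frac{|G(x+h)-G(x)|^{2}}{|h|^{n+2s}}\,dh\right)^{1/2}\right\|_{L^{p}},
\]
inserting the pointwise bound inside the square and splitting into the piece with $|f(x)|^{m-2}$ (which pulls out of the integral in $h$ and is controlled by H\"older in $x$) and the piece with $|f(x+h)|^{m-2}$ (which is absorbed by a Hardy--Littlewood maximal argument, producing the same $\|f\|_{L^{r(m-2)}}^{m-2}$ factor after taking the outer $L^{p}$ norm). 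The main technical obstacle is precisely this $p>2$ transition; once the correct square-function representation is chosen, the remainder of the argument is routine bookkeeping using Minkowski/H\"older and the equivalences between Besov, Sobolev--Slobodetskii and Bessel-potential norms.
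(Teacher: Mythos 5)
The paper does not prove this lemma at all --- it is quoted from \cite{YeAp,YXX}, where the standard argument runs through a Littlewood--Paley/telescoping decomposition of $|f|^{m-2}f$ (writing $F(f)=\sum_j\big(F(S_{j+1}f)-F(S_jf)\big)$ and estimating each block by $\|m_j\|_{L^r}\|\Delta_j f\|_{L^q}$), which is what naturally produces the third index $p$ on the Besov norm $\dot B^{s}_{q,p}$. Your route via the pointwise bound $\big||a|^{m-2}a-|b|^{m-2}b\big|\le C(|a|^{m-2}+|b|^{m-2})|a-b|$ and the first-difference characterizations is genuinely different and more elementary, and it does work cleanly for the Slobodetskii quantities: H\"older in $x$ for each fixed $h$ with $\tfrac{p}{q}+\tfrac{p}{r}=1$ gives $[\,|f|^{m-2}f\,]_{\dot B^{s}_{p,p}}\le C\|f\|_{L^{r(m-2)}}^{m-2}\|f\|_{\dot B^{s}_{q,p}}$, and for $p\le 2$ the embedding $\dot B^{s}_{p,p}\hookrightarrow \dot F^{s}_{p,2}$ converts this into the bound for $\|\Lambda^{s}(|f|^{m-2}f)\|_{L^{p}}$. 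Since every invocation of the lemma in this paper has $p<2$ (in (\ref{LYe003}) one has $p=q/(q-1)<2$ because $q>m>2$, and (\ref{LYe008}), (\ref{LYe010}) explicitly take $1<p<2$), your argument covers everything the paper actually needs.

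The genuine gap is the case $p>2$ of the first inequality, which you correctly identify as the obstacle but do not actually close. In the Stein square-function argument, the term where $|f(x)|^{2(m-2)}$ pulls out of the $h$-integral leaves $|f(x)|^{m-2}S_sf(x)$, and H\"older then yields $\|f\|_{L^{r(m-2)}}^{m-2}\|S_sf\|_{L^{q}}\approx\|f\|_{L^{r(m-2)}}^{m-2}\|\Lambda^{s}f\|_{L^{q}}$; but $\|\Lambda^{s}f\|_{L^{q}}=\|f\|_{\dot F^{s}_{q,2}}$ is \emph{not} controlled by $\|f\|_{\dot B^{s}_{q,p}}$ when $p>2$, since $\dot B^{s}_{q,p}\hookrightarrow\dot F^{s}_{q,2}$ requires $p\le\min\{q,2\}$. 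So as written your $p>2$ argument proves a different inequality (with a Triebel--Lizorkin norm on the right), not the stated one. The companion term carrying $|f(x+h)|^{2(m-2)}$ inside the square function is also not dispatched by the one-line appeal to the maximal function: it does not factor out of the $h$-integral, and making it rigorous (e.g.\ by a vector-valued maximal inequality, or by further splitting $|f(x+h)|^{m-2}$ against $|f(x)|^{m-2}$, which is delicate for $2<m<3$) is precisely where the work lies. If you restrict the statement to $p\le 2$ --- which suffices for this paper --- your proof is complete; otherwise you should either supply the missing embedding-free argument or fall back on the paraproduct proof of the cited references.
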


The next lemma is the commutator estimate which will be used frequently.
\begin{lemma}[see \cite{MX}]\label{Lem24}
Assume that $u$ is a smooth divergence-free vector
field of $\mathbb{R}^{2}$ and $\theta$ is a smooth function.
 Let $\alpha\in(0,1)$, $s\in(-1, \alpha)$, $p\in[2, \infty]$ and $r\in[1, \infty]$, then
\begin{align}\label{t203}
\|[\mathcal{R}_{\alpha},u\cdot
\nabla]\theta\|_{B_{p,r}^{s}}\leq C\|\nabla
u\|_{L^{p}}(\|\theta\|_{
{B}_{\infty,r}^{1-\alpha+s}}+\|\theta\|_{L^{2}}).
\end{align}
\begin{align}\label{tt204}
\sup_{k\geq-1}2^{(\alpha-1)k}\|\Delta_{k},u\cdot
\nabla]\theta\|_{L^{p}}\leq C(\|\nabla u\|_{B_{p,\infty}^{\alpha-1}}+\|u\|_{L^{2}})\|\theta\|_{B_{\infty,\infty}^{0}}.
\end{align}
\end{lemma}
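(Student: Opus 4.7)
The plan is to follow the now-standard Hmidi--Keraani--Rousset reformulation via the modified vorticity, but with a delicate choice of Lebesgue exponent that exploits both the velocity dissipation $\Lambda^\alpha$ and the thermal diffusion $\Lambda^\beta$ simultaneously, together with the improved commutator estimate of Lemma \ref{Lem23}. First I would record the usual a priori bounds: $L^2$ energy gives $u \in L^\infty_T L^2 \cap L^2_T \dot H^{\alpha/2}$ and $\theta \in L^\infty_T L^2 \cap L^2_T \dot H^{\beta/2}$, while the transport--diffusion structure of the $\theta$-equation together with the positivity of $\Lambda^\beta$ gives $\|\theta(t)\|_{L^r} \le \|\theta_0\|_{L^r}$ for all $r\in[2,\infty]$. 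Taking curl of the momentum equation yields $\partial_t\omega + u\cdot\nabla\omega + \Lambda^\alpha\omega = \partial_1\theta$, and setting $G := \omega - \mathcal{R}_\alpha\theta$ with $\mathcal{R}_\alpha=\partial_1\Lambda^{-\alpha}$ cancels the troublesome term $\partial_1\theta$, so that
\begin{equation*}
\partial_t G + u\cdot\nabla G + \Lambda^\alpha G \;=\; [\mathcal{R}_\alpha,\,u\cdot\nabla]\theta \;+\; \mathcal{R}_\alpha\Lambda^\beta\theta.
\end{equation*}

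The core step is an $L^p$ estimate on $G$ for a carefully chosen $p=p(\alpha,\beta)>2$. Multiplying the $G$-equation by $|G|^{p-2}G$ and dropping the nonnegative dissipation contribution, I would estimate the commutator term via Lemma \ref{Lem23} with the test function $F=|G|^{p-2}G$ and with $G$ playing the role of the convecting field (via $u = u_G + u_{\mathcal{R}_\alpha\theta}$ after using $\omega=G+\mathcal{R}_\alpha\theta$), choosing Hölder triples $(p_1,p_2,p_3)$ and smoothness indices $(s_1,s_2)$ satisfying $s_1<1-\alpha$ and $s_1+s_2>1-\alpha$. Lemma \ref{Lem25} then converts $\||G|^{p-2}G\|_{W^{s_2,p_2}}$ into $\|G\|_{B^{s_2}_{q,p}}\|G\|_{L^{r(p-2)}}^{p-2}$ with admissible $(q,r)$, and the piece $\|\Lambda^{s_1}\theta\|_{L^{p_1}}$ is absorbed using the temperature dissipation $\|\Lambda^{\beta/2}\theta\|_{L^2}$ interpolated with $\|\theta\|_{L^\infty}$. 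The second forcing term $\mathcal{R}_\alpha\Lambda^\beta\theta = \partial_1\Lambda^{\beta-\alpha}\theta$, after pairing with $|G|^{p-2}G$, is bounded by $\|\Lambda^{1+\beta-\alpha}\theta\|_{L^p}\|G\|_{L^p}^{p-1}$, and the exponent $1+\beta-\alpha<1$ combined with $\beta<\alpha/2$ and $\beta<3-3\alpha$ allows control by the diffusion plus the already bounded Lebesgue norms of $\theta$.

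Once $\|G\|_{L^\infty_T L^p}$ is under control, $\|\omega\|_{L^\infty_T L^p}\le\|G\|_{L^\infty_T L^p}+\|\mathcal{R}_\alpha\theta\|_{L^\infty_T L^p}$ follows, and Calderón--Zygmund gives $\|\nabla u\|_{L^\infty_T L^p}$. To upgrade to a BKM-type bound $\|\nabla u\|_{L^1_T L^\infty}$ I would next propagate $G$ in a Besov space $B^0_{\infty,1}$ (or a $B^\varepsilon_{p,\infty}$ of appropriate index), applying $\Delta_j$ to the $G$-equation and using the commutator bound \eqref{t203} from Lemma \ref{Lem24} to handle $[\Delta_j, u\cdot\nabla]G$; the $\mathcal{R}_\alpha\Lambda^\beta\theta$ source is estimated by interpolation from the thermal dissipation, while the $\Lambda^\alpha$ term provides a gain via the usual Bernstein/positivity argument of Córdoba--Córdoba. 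This yields $\omega\in L^1_T B^0_{\infty,1}$, hence $\nabla u\in L^1_T L^\infty$, from which $H^\sigma$ propagation for $\sigma>2$ is routine.

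The hardest part will be the parameter bookkeeping in the $L^p$ estimate on $G$. The three conditions in \eqref{NR} arise from forcing the simultaneous feasibility of: (i) the Stefanov--Wu hypothesis $s_1<1-\alpha$ and $s_1+s_2>1-\alpha$ on the commutator, (ii) the Sobolev embedding needed to absorb $\|G\|_{B^{s_2}_{q,p}}$ into $\|\Lambda^{\alpha/2}G\|_{L^p}$ (which requires a relation between $q$ and $p$), and (iii) interpolation of $\|\Lambda^{1+\beta-\alpha}\theta\|_{L^p}$ between $\|\Lambda^{\beta/2}\theta\|_{L^2}$ and $\|\theta\|_{L^\infty}$. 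Writing down these linear inequalities in the unknowns $(s_1,s_2,p,q,r,p_1,p_2,p_3)$ and eliminating variables yields a quadratic constraint on $\alpha$ whose positive root is exactly $\frac{10-2\sqrt{10}}{5}$, and the Appendix is precisely the verification that a valid $p$ exists throughout the stated $(\alpha,\beta)$-region. Uniqueness is standard given the regularity already established.
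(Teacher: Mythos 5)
Your proposal does not address the statement at hand. The statement to be proven is Lemma \ref{Lem24}, a self-contained harmonic-analysis fact about the commutators $[\mathcal{R}_{\alpha},u\cdot\nabla]\theta$ and $[\Delta_{k},u\cdot\nabla]\theta$ (the paper itself does not reprove it but imports it from \cite{MX}). What you have written instead is an outline of the proof of Theorem \ref{Th1}: the modified vorticity $G=\omega-\mathcal{R}_{\alpha}\theta$, the $L^{p}$ estimate on $G$, the BKM-type continuation criterion, and the parameter bookkeeping behind the threshold $\frac{10-2\sqrt{10}}{5}$. None of that establishes either \eqref{t203} or \eqref{tt204}. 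Worse, your outline explicitly \emph{invokes} ``the commutator bound \eqref{t203} from Lemma \ref{Lem24}'' as a tool in the Besov propagation step, so as a proof of Lemma \ref{Lem24} the argument is circular.

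A proof of the lemma must instead work directly with the operators. The standard route (as in \cite{MX} and \cite{HK3,HK4}) is: split $u\cdot\nabla\theta$ by Bony's decomposition into the paraproducts $T_{u_{j}}\partial_{j}\theta$, $T_{\partial_{j}\theta}u_{j}$ and the remainder $R(u_{j},\partial_{j}\theta)$; for the first paraproduct, write $\mathcal{R}_{\alpha}\Delta_{q}$ as convolution with a kernel $h_{q}$ satisfying $\|\,|x|\,h_{q}\|_{L^{1}}\leq C2^{q(1-\alpha)}2^{-q}$ (using that $\mathcal{R}_{\alpha}=\partial_{x}\Lambda^{-\alpha}$ is a homogeneous multiplier of degree $1-\alpha$, smooth away from the origin), so that the commutator with $S_{q-1}u$ gains a full derivative and produces $\|\nabla u\|_{L^{p}}$ times $2^{q(1-\alpha+s)}\|\Delta_{q}\theta\|_{L^{\infty}}$, which sums to $\|\theta\|_{B_{\infty,r}^{1-\alpha+s}}$ under the constraint $s<\alpha$; the remainder term uses the divergence-free condition and requires $s>-1$; and the low-frequency block $\Delta_{-1}$, where $\mathcal{R}_{\alpha}$ is not smooth, is responsible for the additional $\|\theta\|_{L^{2}}$ term. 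The estimate \eqref{tt204} is the analogous classical commutator bound for $[\Delta_{k},u\cdot\nabla]$ with the regularity shifted onto $\nabla u$ in $B_{p,\infty}^{\alpha-1}$. As submitted, your proposal contains no step of this kind, so the lemma remains unproven.
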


Finally, we end the section by the following lemma
gathering the smoothing effect of the transport-diffusion equation.
\begin{lemma}[see, e.g., \cite{HK4,MX,YXX}]\label{Lem26}
Consider the following transport-diffusion equation with $0<\beta\leq1$
\begin{equation*}
\left\{\aligned
&\partial_{t}\theta+(u \cdot \nabla) \theta+\Lambda^{\beta}\theta=0,\\
&\nabla\cdot u=0, \\
&\theta(x,0)=\theta_{0}(x),
\endaligned\right.
\end{equation*}
then for any $(p,\,\rho)\in (1,\,\infty)\times [1,\,\infty]$, the following estimate holds
\begin{eqnarray}\label{t206}\sup_{k\in \mathbb{N}}2^{k\frac{\beta}{\rho}}\|\Delta_{k}\theta\|_{L_{t}^{\rho}L^{p}}\leq
C(\|\theta_{0}\|_{L^{p}}+\|\theta_{0}\|_{L^{\infty}}\|\omega\|_{L_{t}^{1}L^{p}}),
\end{eqnarray}
where $\omega$ is the vorticity of the velocity $u$, namely
$\omega=\nabla\times u$.
\end{lemma}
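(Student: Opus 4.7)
The plan is to prove Theorem~\ref{Th1} by a standard Beale-Kato-Majda-type continuation argument. A classical fixed-point argument produces a local solution in $H^\sigma$, and well-known criteria extend it globally once one controls $\int_0^T(\|\nabla u\|_{L^\infty}+\|\nabla\theta\|_{L^\infty})\,dt$, or equivalently $\|\omega\|_{L^1_T B^{0}_{\infty,1}}$ together with a suitable Besov norm of $\theta$. The task therefore reduces to establishing a priori bounds, which I would organize in three levels: basic (energy and maximum principle), intermediate ($L^p$ bound for the vorticity via a good unknown), and final (Besov bound and BKM closure).

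For the basic level, the standard $L^2$ energy estimate yields $u\in L^\infty_T L^2\cap L^2_T\dot H^{\alpha/2}$ and $\theta\in L^\infty_T L^2\cap L^2_T\dot H^{\beta/2}$, while the maximum principle for the transport-diffusion equation gives $\|\theta(t)\|_{L^p}\le\|\theta_0\|_{L^p}$ for all $p\in[2,\infty]$. The heart of the proof is the good unknown $G:=\omega-\mathcal{R}_\alpha\theta$ with $\mathcal{R}_\alpha=\partial_{x_1}\Lambda^{-\alpha}$: applying $\mathcal{R}_\alpha$ to the temperature equation and subtracting from the vorticity equation $\partial_t\omega+u\cdot\nabla\omega+\Lambda^\alpha\omega=\partial_1\theta$, one obtains
\begin{equation*}
\partial_t G+u\cdot\nabla G+\Lambda^\alpha G=[\mathcal{R}_\alpha,u\cdot\nabla]\theta-\mathcal{R}_\alpha\Lambda^\beta\theta,
\end{equation*}
so that the dangerous buoyancy term $\partial_1\theta$ is absorbed into the dissipation and replaced by a commutator and a lower-order transport-diffusion source.

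The crucial intermediate step is an $L^p$ estimate for $G$ for some $p\in(2,\infty)$ to be chosen. Testing the $G$-equation against $|G|^{p-2}G$, using $L^p$-positivity of $\Lambda^\alpha$ (C\'ordoba-C\'ordoba) on the left, the commutator bound \eqref{SWt201} combined with the bilinear estimate \eqref{YXXt205} on the first source, and the smoothing estimate \eqref{t206} applied to $\theta$ with the $L^p$ vorticity norm on the right, one can close a Gronwall-type loop for $\|G\|_{L^p}$, and hence for $\|\omega\|_{L^p}$. This requires choosing $p$ compatibly with the admissibility of the exponents $(s_1,s_2,p_1,p_2,p_3)$ in Lemma~\ref{Lem23}; the existence of such a $p$ is exactly the content of the Appendix, and is where the technical function $f(\alpha)$ and the numerical threshold $\alpha>\frac{10-2\sqrt{10}}{5}$ enter. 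With $\omega\in L^\infty_T L^p$ in hand, I would upgrade to $\omega\in L^1_T B^{0}_{\infty,1}$ by revisiting the $G$-equation at the Besov level, using the commutator bound \eqref{t203} and Lemma~\ref{Lem26} for the $\Lambda^\beta\theta$ term, and then invoking the logarithmic embedding $\|\nabla u\|_{L^\infty}\lesssim 1+\|\omega\|_{B^{0}_{\infty,1}}\log(e+\|u\|_{H^\sigma})$ together with a standard $H^\sigma$ energy estimate on $(u,\theta)$ to close the bootstrap.

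The hard part, and the only place where the specific threshold $\frac{10-2\sqrt{10}}{5}$ appears, is the $L^p$ estimate for $G$. One needs $p$ large enough that the $\mathcal{R}_\alpha\Lambda^\beta\theta$ source becomes tractable via the $\beta$-smoothing of $\theta$ provided by Lemma~\ref{Lem26}, yet tuned in a sharp way with $\alpha$ and $\beta$ so that the Stefanov-Wu commutator bound applies with admissible Sobolev exponents in Lemma~\ref{Lem23}. Balancing these opposing constraints produces the three-part minimum defining $f(\alpha)$ and, upon optimization, fixes the numerical lower bound on $\alpha$. The improvement over the previous threshold $\widetilde{\alpha_0}\approx 0.7796$ in \cite{MX,YXX,YeAp} comes from a sharper choice of the triple $(p_1,p_2,p_3)$ and of the auxiliary Sobolev indices, which I expect to be the technically delicate core of the argument and which the Appendix is devoted to verifying.
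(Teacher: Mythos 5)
Your proposal does not address the statement at hand. The statement you were asked to prove is Lemma \ref{Lem26}, the smoothing estimate $\sup_{k\in \mathbb{N}}2^{k\beta/\rho}\|\Delta_{k}\theta\|_{L_{t}^{\rho}L^{p}}\leq C(\|\theta_{0}\|_{L^{p}}+\|\theta_{0}\|_{L^{\infty}}\|\omega\|_{L_{t}^{1}L^{p}})$ for the linear transport-diffusion equation with a prescribed divergence-free drift. What you have written instead is a road map for the proof of Theorem \ref{Th1} (the main global regularity result), in which Lemma \ref{Lem26} is merely invoked as a black box at two points. Using the conclusion as an ingredient in an outline of a different theorem is not a proof of the conclusion, so as an attempt at the stated lemma this is vacuous. (For the record, the paper itself does not reprove this lemma either; it imports it from \cite{HK4,MX,YXX}.)

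To actually prove the lemma you would argue roughly as follows. Apply $\Delta_{k}$ to the equation to get $\partial_{t}\Delta_{k}\theta+u\cdot\nabla\Delta_{k}\theta+\Lambda^{\beta}\Delta_{k}\theta=-[\Delta_{k},u\cdot\nabla]\theta$. Testing against $|\Delta_{k}\theta|^{p-2}\Delta_{k}\theta$ and using the generalized Bernstein (lower) bound $\int(\Lambda^{\beta}\Delta_{k}\theta)|\Delta_{k}\theta|^{p-2}\Delta_{k}\theta\,dx\geq c2^{\beta k}\|\Delta_{k}\theta\|_{L^{p}}^{p}$ of \cite{CMZ}, together with the commutator estimate $\|[\Delta_{k},u\cdot\nabla]\theta\|_{L^{p}}\leq C\|\nabla u\|_{L^{p}}\|\theta\|_{L^{\infty}}\leq C\|\omega\|_{L^{p}}\|\theta_{0}\|_{L^{\infty}}$ (the last step using Calder\'on--Zygmund and the maximum principle), one obtains the differential inequality $\frac{d}{dt}\|\Delta_{k}\theta\|_{L^{p}}+c2^{\beta k}\|\Delta_{k}\theta\|_{L^{p}}\leq C\|\omega\|_{L^{p}}\|\theta_{0}\|_{L^{\infty}}$. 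Duhamel's formula and Young's convolution inequality in time (the kernel $e^{-c2^{\beta k}t}$ has $L^{\rho}_{t}$ norm of order $2^{-k\beta/\rho}$) then yield exactly \eqref{t206}, uniformly in $k\in\mathbb{N}$. None of these steps appears in your write-up, so the attempt must be counted as missing the essential content entirely.
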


\vskip .4in
\section{The proof of Theorem \ref{Th1}}\setcounter{equation}{0}
This section is devoted to the proof of Theorem \ref{Th1}. First,
the local well posedness of the
system (\ref{Bouss}) for smooth initial data is well-known (see for instance \cite{MB}), and therefore, it suffices to prove the
global in time {\it a priori} estimate on $[0,\,T]$ for any given
$T>0$. Throughout this paper, we denote by $C$ an universal positive
constant whose value may change from line to line. The symbol $C(x,y,z,...)$ means that $C$ depends on variables $x$, $y$, $z$ and so on.

Let us begin with the natural energy estimates of $(u,\,\theta)$. The proof is standard, thus we omit it.
\begin{lemma}\label{NES00}
Assume that $u_{0}\in L^{2}$ and $\theta_{0}\in L^{2}\cap L^{\infty}$. let $(u, \theta)$
be the corresponding solution of the system (\ref{Bouss}).
Then, for any $t>0$, there holds
\begin{eqnarray}\label{t301}
\|\theta(t)\|_{L^{2}}^{2}+2\int_{0}^{t}{
\|\Lambda^{\frac{\beta}{2}}\theta(\tau)\|_{L^{2}}^{2}\,d\tau}\leq
\|\theta_{0}\|_{L^{2}},\quad\|\theta(t)\|_{L^{p}}\leq \|\theta_{0}\|_{L^{p}},\quad \forall p\in
[2, \infty],
\end{eqnarray}
\begin{eqnarray}\label{t302}
\|u(t)\|_{L^{2}}^{2}+2\int_{0}^{t}{
\|\Lambda^{\frac{\alpha}{2}}u(\tau)\|_{L^{2}}^{2}\,d\tau}\leq
(\|u_{0}\|_{L^{2}}+t\|\theta_{0}\|_{L^{2}})^{2}.
\end{eqnarray}
\end{lemma}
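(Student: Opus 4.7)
The three bounds are standard a priori estimates obtained by testing each equation against a suitable multiplier and exploiting the divergence-free condition together with the positivity of the fractional dissipation. I would proceed in the order $\theta$ in $L^2$, then $\theta$ in $L^p$, then $u$ in $L^2$, since the velocity bound uses the temperature bound as input.

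For the first estimate, I would multiply the $\theta$-equation of (\ref{Bouss}) by $\theta$ and integrate over $\mathbb{R}^2$. Because $\nabla\cdot u = 0$, the transport term vanishes: $\int_{\mathbb{R}^2}(u\cdot\nabla\theta)\,\theta\,dx = \tfrac{1}{2}\int_{\mathbb{R}^2} u\cdot\nabla(\theta^2)\,dx = 0$. Using the self-adjointness of $\Lambda^\beta$ to write $\int \Lambda^\beta\theta\cdot\theta\,dx = \|\Lambda^{\beta/2}\theta\|_{L^2}^2$, I obtain
\begin{equation*}
\tfrac{1}{2}\tfrac{d}{dt}\|\theta\|_{L^2}^2 + \|\Lambda^{\beta/2}\theta\|_{L^2}^2 = 0,
\end{equation*}
and integrating in time yields (\ref{t301})${}_1$.

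For the $L^p$ bound on $\theta$, I would test the $\theta$-equation against $|\theta|^{p-2}\theta$ (for finite $p\ge 2$). The transport term again vanishes by the divergence-free condition. The key point is the Córdoba–Córdoba pointwise inequality $|\theta|^{p-2}\theta\cdot\Lambda^\beta\theta \ge \tfrac{2}{p}\Lambda^\beta(|\theta|^{p/2})\cdot|\theta|^{p/2}$, which, after integration, gives a nonnegative contribution from the dissipation. Thus $\tfrac{d}{dt}\|\theta\|_{L^p}^p \le 0$, so $\|\theta(t)\|_{L^p} \le \|\theta_0\|_{L^p}$ for every finite $p\in[2,\infty)$. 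The case $p=\infty$ follows by letting $p\to\infty$ in the uniform-in-$p$ bound.

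For the velocity estimate, I would take the $L^2$-inner product of the $u$-equation with $u$. The transport and pressure terms drop out by $\nabla\cdot u=0$, leaving
\begin{equation*}
\tfrac{1}{2}\tfrac{d}{dt}\|u\|_{L^2}^2 + \|\Lambda^{\alpha/2}u\|_{L^2}^2 = \int_{\mathbb{R}^2} \theta\, u_2\,dx \le \|\theta\|_{L^2}\|u\|_{L^2} \le \|\theta_0\|_{L^2}\|u\|_{L^2},
\end{equation*}
where the last step uses the $\theta$-bound just established. Dropping the nonnegative dissipation term on the left gives $\tfrac{d}{dt}\|u\|_{L^2}\le \|\theta_0\|_{L^2}$, hence $\|u(t)\|_{L^2}\le \|u_0\|_{L^2}+t\|\theta_0\|_{L^2}$. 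Plugging this back into the time integral of the full differential inequality produces
\begin{equation*}
\|u(t)\|_{L^2}^2 + 2\int_0^t\|\Lambda^{\alpha/2}u(\tau)\|_{L^2}^2\,d\tau \le \|u_0\|_{L^2}^2 + 2\|\theta_0\|_{L^2}\int_0^t\bigl(\|u_0\|_{L^2}+\tau\|\theta_0\|_{L^2}\bigr)d\tau,
\end{equation*}
and the right-hand side collapses to $(\|u_0\|_{L^2}+t\|\theta_0\|_{L^2})^2$, giving (\ref{t302}).

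There is no genuine obstacle in this lemma; the only nontrivial input is the Córdoba–Córdoba inequality needed to justify that the fractional dissipation contributes nonnegatively to the evolution of $\|\theta\|_{L^p}^p$ for $2<p<\infty$, and the mild bootstrap in the velocity estimate where one first obtains a linear-in-$t$ bound on $\|u\|_{L^2}$ and then re-integrates to recover the dissipation integral. Everything else is a direct consequence of the divergence-free structure and the self-adjoint positivity of $\Lambda^\alpha$ and $\Lambda^\beta$.
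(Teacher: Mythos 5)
Your proposal is correct and is exactly the standard energy argument the paper has in mind when it states that ``the proof is standard, thus we omit it'': testing against $\theta$, $|\theta|^{p-2}\theta$ with the C\'ordoba--C\'ordoba positivity of the fractional dissipation, and $u$, followed by the linear-in-$t$ bootstrap for $\|u\|_{L^{2}}$. Note only that your derivation actually yields $\|\theta_0\|_{L^{2}}^{2}$ on the right-hand side of the first inequality in (\ref{t301}), which is what the paper's unsquared $\|\theta_0\|_{L^{2}}$ should evidently read.
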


In order to obtain the $H^{1}$-bound for $(u,\,\theta)$, we apply operator $\mbox{curl}$ to the equation $(\ref{Bouss})_{1}$
to obtain the following vorticity equation ($w=\partial_{1}u_{2}-\partial_{2}u_{1}$)
\begin{eqnarray}\label{t303}\partial_{t}w+(u\cdot\nabla)w+\Lambda^{\alpha} w=\partial_{x}\theta.\end{eqnarray}
However, the "vortex stretching" term $\partial_{x}\theta$
appears to prevent us from proving any global bound for $w$. To overcome this difficulty, we apply the idea introduced by Hmidi, Keraani and Rousset \cite{HK3,HK4} to eliminate the term $\partial_{x}\theta$ from the vorticity equation.  Now we set $\mathcal {R}_{\alpha}$ as the singular integral operator
$$\mathcal {R}_{\alpha}:=\partial_{x}\Lambda^{-\alpha}.$$
Then we can show that the new quantity $G=\omega-\mathcal {R}_{\alpha}\theta$ satisfies
\begin{eqnarray}\label{t305}\partial_{t}G+(u\cdot\nabla)G+\Lambda^{\alpha}G=[\mathcal {R}_{\alpha},\,u\cdot\nabla]\theta+\Lambda^{\beta-\alpha}\partial_{x}\theta,\end{eqnarray}
where here and in the
sequel the following standard commutator notation is used
$$[\mathcal {R}_{\alpha},\,u\cdot\nabla]\theta:=
\mathcal {R}_{\alpha}(u\cdot\nabla\theta)-u\cdot\nabla\mathcal
{R}_{\alpha}\theta.$$
Moreover, the velocity
field $u$ can be decomposed into the following two parts
$$u=\nabla^{\perp}\Delta^{-1}\omega=\nabla^{\perp}\Delta^{-1}G
+\nabla^{\perp}\Delta^{-1}\mathcal {R}_{\alpha}\theta:=u_{G}+u_{\theta}.$$

The following lemma is concerned with the $L^{2}$ estimate of $G$ and $\Lambda^{\delta}\theta$, which was already established in \cite{YeAp}.
\begin{lemma}\label{Lemma031}
Under the assumptions stated in Theorem \ref{Th1}, let $(u, \theta)$
be the corresponding solution of the system (\ref{Bouss}). If
$\beta>1-\alpha$ and $\alpha>\frac{2}{3}$,
then the temperature $\theta$ admits the following bound
\begin{eqnarray}
\sup_{0\leq t\leq T}(\|G(t)\|_{L^{2}}^{2}+
\|\Lambda^{\delta}\theta(t)\|_{L^{2}}^{2})
+\int_{0}^{T}{\big(\|\Lambda^{\frac{\alpha}{2}} G\|_{L^{2}}^{2}
+\|\Lambda^{\delta+\frac{\beta}{2}}\theta\|_{L^{2}}^{2}\big)(\tau)\,d\tau}\leq
C(T,\,u_{0},\,\theta_{0}),\nonumber
\end{eqnarray}
for any
$\max\Big\{\frac{2-2\alpha-\beta}{2},\,\,\,\frac{2+\beta-3\alpha}{2}\Big\}
<\delta<\frac{\beta}{2}$.
\end{lemma}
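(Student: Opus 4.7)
The plan is to combine an $L^2$ energy estimate for $G$ with an $H^\delta$ energy estimate for $\theta$, then absorb everything into the two dissipation budgets and close via Gronwall. First I would take the $L^{2}$ inner product of the $G$-equation \eqref{t305} with $G$ and use the divergence-free condition to obtain
\begin{equation*}
\frac{1}{2}\frac{d}{dt}\|G\|_{L^{2}}^{2}+\|\Lambda^{\alpha/2}G\|_{L^{2}}^{2}
=\int_{\mathbb{R}^{2}} G\,[\mathcal{R}_{\alpha},u\cdot\nabla]\theta\,dx
+\int_{\mathbb{R}^{2}} G\,\Lambda^{\beta-\alpha}\partial_{x}\theta\,dx.
\end{equation*}
Using the splitting $u=u_{G}+u_{\theta}$, I would apply \eqref{SWt201} of Lemma \ref{Lem23} to the $u_{G}$ piece and \eqref{SWt202} to the $u_{\theta}$ piece, with exponents $(s_{1},s_{2},p_{1},p_{2},p_{3})$ chosen so that $\|\Lambda^{s_{1}}\theta\|_{L^{p_{1}}}$ is interpolated between $\|\theta\|_{L^{\infty}}$ (controlled by \eqref{t301}) and $\|\Lambda^{\delta+\beta/2}\theta\|_{L^{2}}$, while $\|G\|_{W^{s_{2},p_{2}}}$ is interpolated between $\|G\|_{L^{2}}$ and $\|\Lambda^{\alpha/2}G\|_{L^{2}}$; here Riesz-transform boundedness gives $\|u_{G}\|_{L^{p_{3}}}\lesssim\|G\|_{L^{p_{3}}}$. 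The constraints $0\leq s_{1}<1-\alpha$ and $s_{1}+s_{2}>1-\alpha$ (respectively $>2-2\alpha$) together with $\alpha>2/3$ is precisely what leaves enough room for the interpolations to fit inside the two dissipation norms after a Young's inequality.

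The linear coupling term I would treat directly by Cauchy--Schwarz,
\begin{equation*}
\left|\int G\,\Lambda^{\beta-\alpha}\partial_{x}\theta\,dx\right|
\leq \|\Lambda^{\alpha/2}G\|_{L^{2}}\,\|\Lambda^{1+\beta-3\alpha/2}\theta\|_{L^{2}},
\end{equation*}
followed by interpolation of $\|\Lambda^{1+\beta-3\alpha/2}\theta\|_{L^{2}}$ between $\|\Lambda^{\delta}\theta\|_{L^{2}}$ and $\|\Lambda^{\delta+\beta/2}\theta\|_{L^{2}}$. The admissibility of this interpolation is exactly the lower bound $\delta>(2+\beta-3\alpha)/2$ appearing in the lemma; this term is then absorbed into $\tfrac{1}{4}\|\Lambda^{\alpha/2}G\|_{L^{2}}^{2}+\tfrac{1}{4}\|\Lambda^{\delta+\beta/2}\theta\|_{L^{2}}^{2}$, leaving a harmless multiple of $\|\Lambda^{\delta}\theta\|_{L^{2}}^{2}$ on the right.

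Next I would apply $\Lambda^{\delta}$ to the $\theta$-equation and pair with $\Lambda^{\delta}\theta$, obtaining
\begin{equation*}
\frac{1}{2}\frac{d}{dt}\|\Lambda^{\delta}\theta\|_{L^{2}}^{2}
+\|\Lambda^{\delta+\beta/2}\theta\|_{L^{2}}^{2}
=-\int \Lambda^{\delta}\theta\,[\Lambda^{\delta},u\cdot\nabla]\theta\,dx.
\end{equation*}
Splitting once more $u=u_{G}+u_{\theta}$, the $u_{G}$ contribution is handled by a Kato--Ponce/Kenig-type commutator estimate combined with $\|u_{G}\|_{L^{q}}\lesssim\|G\|_{L^{q}}$, while the $u_{\theta}$ contribution exploits the fact that $u_{\theta}=\nabla^{\perp}\Delta^{-1}\mathcal{R}_{\alpha}\theta$ enjoys a gain of $\alpha$ derivatives relative to $\theta$. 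The other requirement $\delta>(2-2\alpha-\beta)/2$ enters here to make each commutator term fall within the two dissipation budgets after Young's inequality, while the residual is linear in $\|G\|_{L^{2}}^{2}+\|\Lambda^{\delta}\theta\|_{L^{2}}^{2}$ with an $L^{1}_{t}$ coefficient drawn from \eqref{t301}--\eqref{t302}.

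Adding the two estimates, absorbing the dissipations, and invoking Gronwall's inequality then yields the claimed bound. The principal difficulty is not any individual inequality but the simultaneous bookkeeping of the Stefanov--Wu exponents in Lemma \ref{Lem23} and the interpolation indices for $\theta$: every nonlinear term has to land into either $\|\Lambda^{\alpha/2}G\|_{L^{2}}^{2}$ or $\|\Lambda^{\delta+\beta/2}\theta\|_{L^{2}}^{2}$ with a remainder that is either time-integrable via the basic energy inequalities or linear in the quantity being controlled. It is this sharp interplay that dictates the precise hypotheses $\alpha>2/3$, $\beta>1-\alpha$, and the stated window for $\delta$.
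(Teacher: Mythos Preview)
The paper does not give its own proof of this lemma; it is simply quoted from \cite{YeAp}. Your outline follows exactly the standard strategy used there: couple the $L^{2}$ energy identity for $G$ with the $H^{\delta}$ energy identity for $\theta$, split $u=u_{G}+u_{\theta}$, treat the $\mathcal{R}_{\alpha}$-commutators with the Stefanov--Wu bounds of Lemma~\ref{Lem23}, treat the $\Lambda^{\delta}$-commutator by a Kato--Ponce type estimate, and close via Gronwall. So your approach is the intended one.

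One small correction in the bookkeeping: for the linear coupling term you should not interpolate $\|\Lambda^{1+\beta-3\alpha/2}\theta\|_{L^{2}}$ between $\|\Lambda^{\delta}\theta\|_{L^{2}}$ and $\|\Lambda^{\delta+\beta/2}\theta\|_{L^{2}}$, because $1+\beta-\tfrac{3\alpha}{2}$ can drop below $\delta$ (indeed below $0$) when $\alpha$ is close to $1$. The clean fix is either to interpolate from $\|\theta\|_{L^{2}}$ at the low end, or, equivalently, not to put the full $\alpha/2$ derivatives on $G$: choose instead $s\in[0,\alpha/2]$ just above $1+\tfrac{\beta}{2}-\alpha-\delta$ so that both $\|\Lambda^{s}G\|_{L^{2}}$ and $\|\Lambda^{1+\beta-\alpha-s}\theta\|_{L^{2}}$ sit strictly inside the two dissipation budgets. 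The feasibility of such an $s$ is precisely the condition $\delta>\tfrac{2+\beta-3\alpha}{2}$, so your identification of where that lower bound enters is correct; only the endpoint of the interpolation needed adjusting.
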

\begin{rem}\rm
Although the above Lemma \ref{Lemma031} holds for
$\max\Big\{\frac{2-2\alpha-\beta}{2},\,\,\,\frac{2+\beta-3\alpha}{2}\Big\}
<\delta<\frac{\beta}{2}$, yet by energy estimate (\ref{t301}) and
the classical interpolation, it is actually
true for any $0\leq\delta<\frac{\beta}{2}$. We also remark that $\delta$ can be arbitrarily close to the number $\frac{\beta}{2}$, but at present, we don't know whether Lemma \ref{Lemma031} is true for the case $\delta=\frac{\beta}{2}$.
\end{rem}

With the help of Lemma \ref{Lemma031}, we will show the following global {\it a priori} bound of $L^{m}$ ($2<m<4$) norm for the quantity $G$.
\begin{lemma}\label{Lemma032}
Let $\frac{2}{3}<\alpha<1$ and $1-\alpha<\beta<\frac{\alpha}{2}$.
Assume that $(u_{0},\,\theta_{0})$ satisfies the assumptions stated in Theorem \ref{Th1}, then the combined equation (\ref{t305}) admits the following bound for any $0\leq t\leq T$
\begin{eqnarray}\label{t331}
\|G(t)\|_{L^{m}}^{m}+
\int_{0}^{T}{\|G(\tau)\|_{L^{\frac{2m}{2-\alpha}}}^{m}\,d\tau}\leq
C(T,\,u_{0},\,\theta_{0}),
\end{eqnarray}
where $m$ satisfies the following restriction
\begin{equation}\label{YeCond}
\left\{\aligned
&\frac{4}{3-\alpha-\beta}<m<\min\Big\{4,\,\,\frac{1}{1-\alpha}\Big\}, \\
&\big(2(2-\alpha)\beta-3\alpha+2\big)m<4(2-\alpha)\beta,\\
&(4+8\beta-4\alpha-3\alpha^{2})m<16\beta.
\endaligned\right.
\end{equation}
\end{lemma}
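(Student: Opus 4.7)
The plan is to run an $L^m$-energy estimate directly on the equation (\ref{t305}) for $G$. Multiplying (\ref{t305}) by $|G|^{m-2}G$ and integrating over $\mathbb{R}^2$, the convective term $\int (u\cdot\nabla)G\,|G|^{m-2}G\,dx$ vanishes by $\nabla\cdot u=0$, while the C\'ordoba--C\'ordoba pointwise inequality together with the Sobolev embedding $\dot H^{\alpha/2}(\mathbb{R}^2)\hookrightarrow L^{4/(2-\alpha)}(\mathbb{R}^2)$ applied to $|G|^{m/2}$ produces the lower bound
\begin{equation*}
\int_{\mathbb{R}^2}|G|^{m-2}G\,\Lambda^{\alpha}G\,dx\;\gtrsim\;\|G\|_{L^{2m/(2-\alpha)}}^{m}.
\end{equation*}
Hence it remains to control the two forcing terms
\begin{equation*}
I_{1}=\int_{\mathbb{R}^{2}} |G|^{m-2}G\,[\mathcal{R}_{\alpha},u\cdot\nabla]\theta\,dx,\qquad I_{2}=\int_{\mathbb{R}^{2}} |G|^{m-2}G\,\Lambda^{\beta-\alpha}\partial_{x}\theta\,dx,
\end{equation*}
by a sum of $\epsilon\|G\|_{L^{2m/(2-\alpha)}}^{m}$ (absorbed into the dissipation) and time-integrable quantities of the form $g(t)\|G\|_{L^{m}}^{m}+h(t)$.

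For $I_{1}$, I would split $u=u_{G}+u_{\theta}$ and apply the Stefanov--Wu commutator estimates of Lemma \ref{Lem23}: inequality (\ref{SWt201}) handles the $u_{G}$-piece, and (\ref{SWt202}) handles the $u_{\theta}$-piece (where the advecting velocity is $\theta$-driven, which is why the thresholds $s_1+s_2>1-\alpha$ and $s_1+s_2>2-2\alpha$ arise). In both cases the factor $\||G|^{m-2}G\|_{W^{s_2,p_2}}$ is dismantled via Lemma \ref{Lem25}, yielding a Besov norm of $G$ times a power of $\|G\|_{L^{r(m-2)}}$; the Besov factor is upgraded to $\|\Lambda^{\alpha/2}G\|_{L^2}$ or $\|G\|_{L^{2m/(2-\alpha)}}$ through Bernstein/Sobolev inequalities, while the low-regularity factor $\|\Lambda^{s_1}\theta\|_{L^{p_1}}$ is interpolated between the $L^p$ maximum principle (\ref{t301}) and the gain $\|\Lambda^{\delta+\beta/2}\theta\|_{L^2}$ of Lemma \ref{Lemma031}. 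For $I_{2}$, the simplest route is a H\"older splitting
\begin{equation*}
|I_{2}|\;\le\; \|G\|_{L^{(m-1)q}}^{m-1}\,\bigl\|\Lambda^{\beta-\alpha+1}\theta\bigr\|_{L^{q'}},
\end{equation*}
choosing $q=2m/((m-1)(2-\alpha))$ so that the first factor becomes $\|G\|_{L^{2m/(2-\alpha)}}^{m-1}$, and then interpolating $\Lambda^{\beta-\alpha+1}\theta$ between $\Lambda^{\delta}\theta$ and $\Lambda^{\delta+\beta/2}\theta$ supplied by Lemma \ref{Lemma031}; this is feasible because $\beta-\alpha+1<1$ under the running assumptions.

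Combining these bounds and applying Young's inequality, the $\|G\|_{L^{2m/(2-\alpha)}}^{m}$ contributions are absorbed into the dissipation while all remaining factors multiply either $\|G\|_{L^{m}}^{m}$ (with a time-integrable coefficient, thanks to the $L^{2}_{T}$ bounds on $\Lambda^{\alpha/2}G$ and $\Lambda^{\delta+\beta/2}\theta$) or contribute an $L^{1}_{T}$ free term. Gronwall's inequality then closes the estimate and yields (\ref{t331}). The main obstacle is the index bookkeeping: each of the three lines in (\ref{YeCond}) should be traceable to a specific constraint in the argument above. Roughly, $m<\min\{4,1/(1-\alpha)\}$ enforces the C\'ordoba--C\'ordoba/Sobolev regime together with the Stefanov--Wu ceiling $s_{1}<1-\alpha$; the lower bound $m>4/(3-\alpha-\beta)$ encodes the lower threshold $s_{1}+s_{2}>1-\alpha$ (and its $2-2\alpha$ counterpart); and the two polynomial inequalities in $(\alpha,\beta,m)$ come from the Young exponents needed to absorb $\|G\|_{L^{2m/(2-\alpha)}}^{m}$ once interpolations with $\|\Lambda^{\alpha/2}G\|_{L^{2}}$ and $\|\Lambda^{\delta+\beta/2}\theta\|_{L^{2}}$ are performed. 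Checking that (\ref{YeCond}) is in fact consistent (so that an admissible $m$, and the accompanying $p$, exists under the hypotheses on $\alpha,\beta$) is the delicate point, and is precisely what is verified in the Appendix.
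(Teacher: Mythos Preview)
Your overall strategy matches the paper's: multiply (\ref{t305}) by $|G|^{m-2}G$, use the C\'ordoba--C\'ordoba/Sobolev lower bound to extract $\|G\|_{L^{2m/(2-\alpha)}}^{m}$, split the commutator via $u=u_{G}+u_{\theta}$, and close with Gronwall against the bounds of Lemma~\ref{Lemma031}. The handling of the $u_{G}$-commutator (the paper's $N_{1}$) is also essentially as in the paper.

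Your treatment of $I_{2}$, however, has a genuine gap. You propose
\[
|I_{2}|\;\le\; \|G\|_{L^{2m/(2-\alpha)}}^{m-1}\,\bigl\|\Lambda^{1+\beta-\alpha}\theta\bigr\|_{L^{q'}}
\]
and then claim to interpolate $\Lambda^{1+\beta-\alpha}\theta$ between $\Lambda^{\delta}\theta$ and $\Lambda^{\delta+\beta/2}\theta$ from Lemma~\ref{Lemma031}. This is impossible: since $\alpha<1$ one has $1+\beta-\alpha>\beta>\delta+\tfrac{\beta}{2}$, so the derivative order required on $\theta$ strictly exceeds the maximal $\theta$-regularity available. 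Your remark that ``$\beta-\alpha+1<1$'' is true but beside the point; the relevant ceiling is $\delta+\beta/2$, not $1$.

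The paper avoids this by \emph{not} placing all of $|G|^{m-2}G$ in a pure Lebesgue space. It writes $I_{2}$ as $\int \Lambda^{1+\beta-\alpha-\eta}\theta\cdot\Lambda^{\eta}(|G|^{m-2}G)\,dx$ and invokes Lemma~\ref{Lem25} to factor
\[
\|\Lambda^{\eta}(|G|^{m-2}G)\|_{L^{p}}\;\lesssim\;\|G\|_{B^{\eta}_{2,p}}\,\|G\|_{L^{\frac{2p(m-2)}{2-p}}}^{m-2},
\]
then bounds $\|G\|_{B^{\eta}_{2,p}}\lesssim\|G\|_{H^{\alpha/2}}$ (available in $L^{2}_{T}$ from Lemma~\ref{Lemma031}) provided $\eta<\alpha/2$. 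Choosing $\eta>\beta$---which is possible precisely because of the hypothesis $\beta<\alpha/2$---leaves only $1+\beta-\alpha-\eta<1-\alpha$ derivatives on $\theta$, and \emph{this} can be controlled by a fractional Gagliardo--Nirenberg inequality against $\|\Lambda^{\delta+\beta/2}\theta\|_{L^{2}}$ and $\|\theta\|_{L^{\infty}}$. The same $\eta$-shift is exactly what makes the $u_{\theta}$-piece of your $I_{1}$ (the paper's $N_{2}$) go through, since the hypothesis $s_{1}<1-\alpha$ in (\ref{SWt202}) forces $s_{1}=1+\beta-\alpha-\eta<1-\alpha$, i.e.\ $\eta>\beta$. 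The two polynomial inequalities in (\ref{YeCond}) then arise from the Young-exponent condition (\ref{LYe017})--(\ref{LYe018}) that this more intricate interpolation structure generates; they are not recoverable from your simpler splitting of $I_{2}$.
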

\begin{rem}\rm
It is worthy to emphasize that under the conditions $\frac{2}{3}<\alpha<1$ and $1-\alpha<\beta<\frac{\alpha}{2}$, we indeed can choose some $m\in (2,\,4)$ to guarantee
the above restriction (\ref{YeCond}).
\end{rem}
\begin{proof}[\textbf{Proof of Lemma \ref{Lemma032}}]
Multiplying the equation (\ref{t305}) by
$|G|^{m-2}G$, we have after integrating by part and using the divergence-free condition
\begin{eqnarray}\label{LYe001}&&\frac{1}{m}\frac{d}{dt}\|G(t)\|_{L^{m}}^{m}
+\int_{\mathbb{R}^{2}}
(\Lambda^{\alpha}G)|G|^{m-2}G\,dx\nonumber\\&=&\int_{\mathbb{R}^{2}}
{[\mathcal
{R}_{\alpha},\,u\cdot\nabla]\theta\,\,|G|^{m-2}G\,dx}+\int_{\mathbb{R}^{2}}
{\Lambda^{\beta-\alpha}\partial_{x}\theta\,\,|G|^{m-2}G\,dx}\nonumber\\&=&\int_{\mathbb{R}^{2}}
{[\mathcal
{R}_{\alpha},\,u_{G}\cdot\nabla]\theta\,\,|G|^{m-2}G\,dx}+\int_{\mathbb{R}^{2}}
{[\mathcal
{R}_{\alpha},\,u_{\theta}\cdot\nabla]\theta\,\,|G|^{m-2}G\,dx}\nonumber\\&&
+\int_{\mathbb{R}^{2}}
{\Lambda^{\beta-\alpha}\partial_{x}\theta\,\,|G|^{m-2}G\,dx}\nonumber\\&:=&
N_{1}+N_{2}+N_{3},
\end{eqnarray}
where in the third line we have applied the fact $u=u_{G}+u_{\theta}$.
Thanks to the maximum principle and Sobolev embedding, we deduce that there exists an absolute constant $\widetilde{C}>0$ such that
\begin{eqnarray}\label{LYe002}\int_{\mathbb{R}^{2}}
(\Lambda^{\alpha}G)|G|^{m-2}G\,dx\geq {C}\|\Lambda^{\frac{\alpha}{2}}G^{\frac{m}{2}}\|_{L^{2}}^{2}\geq \widetilde{C}\|G\|_{L^{\frac{2m}{2-\alpha}}}^{m}.\end{eqnarray}
To begin with, we handle the first term $N_{1}$ at the R-H-S of (\ref{LYe001}). As a matter of fact, $N_{1}$ admits a suitable estimate (see \cite{YeAp,YXX}). Now we sketch it here for reader's convenience.
The inequality (\ref{SWt201}) with $s_{1}=0$ as well as the inequality (\ref{YXXt205}) allows us to show
\begin{eqnarray}\label{LYe003} N_{1}&\leq& \Big|\int_{\mathbb{R}^{2}}
{[\mathcal
{R}_{\alpha},\,u_{G}\cdot\nabla]\theta\,\,|G|^{m-2}G\,dx}\Big| \nonumber\\&\leq&
C\|G\|_{L^{q}}\|\theta\|_{L^{\infty}}
\||G|^{m-2}G\|_{W^{s_{2}-\frac{\widetilde{\delta}}{2},\,p}}\nonumber\\
&&(  \mbox{ $\widetilde{\delta}>0$ is small enough})\nonumber\\
&\leq&C\|\theta_{0}\|_{L^{\infty}}
\|G\|_{L^{q}}\|G\|_{L^{(m-2)\times\frac{q}{m-2}}}^{m-2}
\|G\|_{B_{\frac{q}{q-(m-1)},\,p}^{s_{2}-\frac{\widetilde{\delta}}{2}}}
\nonumber\\
&\leq&C\|\theta_{0}\|_{L^{\infty}}
\|G\|_{L^{q}}^{m-1}\|G\|_{B_{\frac{q}{q-(m-1)},\,p}^{s_{2}-\frac{\widetilde{\delta}}{2}}}\nonumber\\
&\leq&C\|\theta_{0}\|_{L^{\infty}}
\|G\|_{L^{q}}^{m-1}\|G\|_{H^{s_{2}-1+\frac{2(m-1)}{q}}},
\end{eqnarray}
where the exponents should satisfy
$$s_{2}-\frac{\widetilde{\delta}}{2}>1-\alpha,\quad \frac{1}{p}+\frac{1}{q}=1,\quad m-1<q< 2(m-1).$$ Moreover, the embedding $H^{s_{2}-1+\frac{2(m-1)}{q}}\hookrightarrow B_{\frac{q}{q-(m-1)},\,p}^{s_{2}-\frac{\widetilde{\delta}}{2}}$ has been used.
Noticing the requirement $s_{2}-\frac{\widetilde{\delta}}{2}>1-\alpha$, one may select a sufficiently small $\widetilde{\delta}>0$ (in fact we can take $\widetilde{\delta}\leq\frac{3\alpha-2}{8}$ for example to satisfy all the conditions) such that
$$s_{2}=1-\alpha+\widetilde{\delta}.$$
By means of the following interpolation inequality
$$\|G\|_{H^{-\alpha+\widetilde{\delta}+\frac{2(m-1)}{q}}}\leq C \|G\|_{L^{2}}^{1-\mu}
\|G\|_{H^{\frac{\alpha}{2}}}^{\mu},\qquad\mu=\frac{-2\alpha+2\widetilde{\delta}+\frac{4(m-1)}{q}}{\alpha}$$
for $$\frac{4(m-1)}{3\alpha-2\widetilde{\delta}}< q< \frac{2(m-1)}{\alpha-\widetilde{\delta}}\Rightarrow \mu\in (0,\,1),$$
we infer that
\begin{eqnarray}\label{LYe004}N_{1}&\leq& C\|\theta_{0}\|_{L^{\infty}}\|G\|_{L^{q}}^{m-1}\|G\|_{L^{2}}^{1-\mu}
\|G\|_{H^{\frac{\alpha}{2}}}^{\mu}\nonumber
\\&\leq&C\|G\|_{L^{q}}^{m-1}
\|G\|_{H^{\frac{\alpha}{2}}}^{\mu}\nonumber
\\&\leq&
C\|G\|_{L^{m}}^{(m-1)(1-\varsigma)}
\|G\|_{L^{\frac{2m}{2-\alpha}}}^{(m-1)\varsigma}
\|G\|_{H^{\frac{\alpha}{2}}}^{\mu}\nonumber\\
&\leq& \frac{\widetilde{C}}{4}\|G\|_{L^{\frac{2m}{2-\alpha}}}^{m}+
C\|G\|_{H^{\frac{\alpha}{2}}}^{\frac{m\mu}{m-(m-1)\varsigma}}
\|G\|_{L^{m}}^{\frac{m(m-1)(1-\varsigma)}{m-(m-1)\varsigma}},
\end{eqnarray}
where we have used the following simple interpolation
\begin{eqnarray}\label{LYe005}
\|G\|_{L^{q}}\leq C\|G\|_{L^{m}}^{1-\varsigma}
\|G\|_{L^{\frac{2m}{2-\alpha}}}^{\varsigma},\quad \varsigma=\frac{2-\frac{2m}{q}}{\alpha},
\end{eqnarray}
for
\begin{eqnarray}\label{LYe006}m<q<
\frac{2m}{2-\alpha}\Rightarrow 0< \varsigma<1.\end{eqnarray}
Noting the following facts
$$ \frac{m(m-1)(1-\varsigma)}{m-(m-1)\varsigma}\leq m,\qquad m\leq\frac{2}{2-2\alpha+\widetilde{\delta}}\Rightarrow \frac{m\mu}{m-(m-1)\varsigma}\leq 2,$$
it follows that
\begin{eqnarray}\label{LYe007}
N_{1}
\leq \frac{\widetilde{C}}{4}\|G\|_{L^{\frac{2m}{2-\alpha}}}^{m}+
C(\|G\|_{H^{\frac{\alpha}{2}}}^{2}+1)
(\|G\|_{L^{m}}^{m}+1).
\end{eqnarray}
We point out that the choice of the number $q$ is possible if we select $\widetilde{\delta}<\frac{3\alpha-2}{2}$. Actually,
combining all the requirement on the number $q$, it should be
$$\max\Big\{m-1,\,\,\frac{4(m-1)}{3\alpha-2\widetilde{\delta}},\,\,m\Big\}<q< \min\Big\{2(m-1),\,\,\frac{2(m-1)}{\alpha-\widetilde{\delta}},\,\,\frac{2m}{2-\alpha}\Big\}.$$
The second term $N_{2}$ and the last term $N_{3}$ at the R-H-S of (\ref{LYe001}) will be treated differently compared with the first term. Here the estimates for the terms $N_{2}$ and $N_{3}$ are the main difference compared to the ones in \cite{YeAp,YXX}.
By making use of the estimate (\ref{SWt202}) with $s_{1}=1+\beta-\alpha-\eta$ and $s_{2}=\eta$, one can show that for any $1<p<2$
\begin{eqnarray}\label{LYe008}N_{2}&\leq&\Big|\int_{\mathbb{R}^{2}}
{[\mathcal
{R}_{\alpha},\,u_{\theta}\cdot\nabla]\theta\,\,|G|^{m-2}G\,dx}\Big|\nonumber\\&\leq&
C\|\Lambda^{1+\beta-\alpha-\eta}\theta\|_{L^{\frac{p}{p-1}}}\|\theta\|_{L^{\infty}}
\||G|^{m-2}G\|_{W^{\eta,\,p}}\nonumber\\
&\leq&C\|\Lambda^{1+\beta-\alpha-\eta}\theta\|_{L^{\frac{p}{p-1}}}
\|\theta_{0}\|_{L^{\infty}} \|G\|_{B_{2,\,p}^{\eta}}\|G\|_{L^{\frac{2p(m-2)}{2-p}}}^{m-2}\nonumber\\
&\leq&C\|\Lambda^{1+\beta-\alpha-\eta}\theta\|_{L^{\frac{p}{p-1}}} \|G
\|_{H^{\frac{\alpha}{2}}}\|G\|_{L^{\frac{2p(m-2)}{2-p}}}^{m-2},
\end{eqnarray}
here and in what follows, we select the parameter $\eta$ such that  \begin{eqnarray}\label{LYe009}\beta<\eta<\min\Big\{1+\beta-\alpha,\,\frac{\alpha}{2}\Big\}.
\end{eqnarray}
Under the assumption (\ref{LYe009}), we are now resorting to the inequality (\ref{YXXt205}) to find
\begin{eqnarray}\label{LYe010}N_{3}&\leq&\Big|\int_{\mathbb{R}^{2}}
{\Lambda^{\beta-\alpha}\partial_{x}\theta\,\,|G|^{m-2}G\,dx}\Big|\nonumber\\&\leq&
C\|\Lambda^{1+\beta-\alpha-\eta}\theta\|_{L^{\frac{p}{p-1}}}
\|\Lambda^{\eta}(|G|^{m-2}G)
\|_{L^{p}}\nonumber\\
&\leq& C\|\Lambda^{1+\beta-\alpha-\eta}\theta\|_{L^{\frac{p}{p-1}}}
\|G\|_{B_{2,\,p}^{\eta}}\|G\|_{L^{\frac{2p(m-2)}{2-p}}}^{m-2}\nonumber\\
&\leq&C\|\Lambda^{1+\beta-\alpha-\eta}\theta\|_{L^{\frac{p}{p-1}}} \|G
\|_{H^{\frac{\alpha}{2}}}\|G\|_{L^{\frac{2p(m-2)}{2-p}}}^{m-2}.
\end{eqnarray}
Now let us recall the following fractional version of the Gagliardo-Nirenberg inequality
\begin{eqnarray}\label{LYe011}
\|\Lambda^{1+\beta-\alpha-\eta}\theta\|_{L^{\frac{p}{p-1}}(\mathbb{R}^{2})}
&\leq& C\|\Lambda^{s}\theta\|_{L^{2}(\mathbb{R}^{2})}^{l}
\|\theta\|_{L^{\infty}(\mathbb{R}^{2})}^{1-l}\qquad (\mbox{see  Theorem 1.2 of \cite{HMOW}})\nonumber\\&\leq& C
\|\theta\|_{L^{2}(\mathbb{R}^{2})}^{\frac{(2\delta+
\beta-2s)l}{2\delta+
\beta}}
\|\Lambda^{\delta+\frac{\beta}{2}}\theta\|_{L^{2}(\mathbb{R}^{2})}^{\frac{2sl}{2\delta+
\beta}}
\|\theta\|_{L^{\infty}(\mathbb{R}^{2})}^{1-l},
\end{eqnarray}
where we need the following restrictions
$$\frac{p-1}{p}-\frac{1+\beta-\alpha-\eta}{2}=\frac{1-s}{2}l\quad \mbox{or} \quad l=\frac{2(p-1)-(1+\beta-\alpha-\eta)p}{(1-s)p}\in (0,\,1),$$
$$1+\beta-\alpha-\eta<sl,\qquad 1+\beta-\alpha-\eta<s<\beta.$$
One can easily check that the above inequality (\ref{LYe011}) holds as long as $p$ satisfies
\begin{eqnarray}\label{LYe012}
\frac{2s}{2s+\alpha+\eta-\beta-1}<p<\frac{2}{\alpha+\eta-\beta+s}.
\end{eqnarray}
Let us also recall the following simple interpolation
\begin{eqnarray}\label{LYe013}
\|G\|_{L^{\frac{2p(m-2)}{2-p}}}\leq C\|G\|_{L^{m}}^{1-\lambda}
\|G\|_{L^{\frac{2m}{2-\alpha}}}^{\lambda},\quad \lambda=\frac{2}{\alpha}-\frac{(2-p)m}{(m-2)\alpha p},
\end{eqnarray}
where
\begin{eqnarray}\label{LYe014}\frac{2m}{3m-4}<p<
\frac{2m}{(2-\alpha)(m-2)+m}\Rightarrow \lambda\in (0,\,1).\end{eqnarray}
The estimates (\ref{LYe011}), (\ref{LYe013}) and the H$\rm\ddot{ o}$lder inequality give directly
\begin{eqnarray}\label{LYe015}&&
C\|\Lambda^{1+\beta-\alpha-\eta}\theta\|_{L^{\frac{p}{p-1}}} \|G
\|_{H^{\frac{\alpha}{2}}}\|G\|_{L^{\frac{2p(m-2)}{2-p}}}^{m-2}\nonumber\\
&\leq&C\|\theta\|_{L^{2} }^{\frac{(2\delta+
\beta-2s)l}{2\delta+
\beta}}
\|\Lambda^{\delta+\frac{\beta}{2}}\theta\|_{L^{2} }^{\frac{2sl}{2\delta+
\beta}}
\|\theta\|_{L^{\infty} }^{1-l} \|G
\|_{H^{\frac{\alpha}{2}}} \|G\|_{L^{m}}^{(m-2)(1-\lambda)}
\|G\|_{L^{\frac{2m}{2-\alpha}}}^{(m-2)\lambda}\nonumber\\
&\leq&C
\|\Lambda^{\delta+\frac{\beta}{2}}\theta\|_{L^{2} }^{\frac{2sl}{2\delta+
\beta}}\|G
\|_{H^{\frac{\alpha}{2}}} \|G\|_{L^{m}}^{(m-2)(1-\lambda)}
\|G\|_{L^{\frac{2m}{2-\alpha}}}^{(m-2)\lambda}\nonumber\\
&\leq& \frac{\widetilde{C}}{8}\|G\|_{L^{\frac{2m}{2-\alpha}}}^{m}+
C(\|\Lambda^{\delta+\frac{\beta}{2}}\theta\|_{L^{2} }^{\frac{2sl}{2\delta+
\beta}}\|G\|_{H^{\frac{\alpha}{2}}})^{\frac{m}{m-(m-2)\lambda}} \|G\|_{L^{m}}^{\frac{m(m-2)(1-\lambda)}{m-(m-2)\lambda}}
\nonumber\\
&\leq& \frac{\widetilde{C}}{8}\|G\|_{L^{\frac{2m}{2-\alpha}}}^{m}+
C(1+\|\Lambda^{\delta+\frac{\beta}{2}}\theta\|_{L^{2} }^{2}+\|G\|_{H^{\frac{\alpha}{2}}}^{2}) (1+\|G\|_{L^{m}}^{m}),
\end{eqnarray}
where in the last line we have used the following requirement
\begin{eqnarray}\label{LYe016}\Big(\frac{2sl}{2\delta+
\beta}+1\Big)\frac{m}{m-(m-2)\lambda}\leq 2.
\end{eqnarray}
Notice that $\delta<\frac{\beta}{2}$ and $\delta$ can be arbitrarily close to the number $\frac{\beta}{2}$, we can check the following requirement instead of (\ref{LYe016})
\begin{eqnarray}\label{LYe017}\Big(\frac{s}{
\beta}l+1\Big)\frac{m}{m-(m-2)\lambda}<2.
\end{eqnarray}
Here we mention that because of the presence of parameter $\delta$ in (\ref{LYe016}), the requirement (\ref{LYe017}) is more simpler than (\ref{LYe016}). Moreover, considering (\ref{LYe017}) will not affect our main result.
Inserting $l$ and $\lambda$ into (\ref{LYe017}), we get the following restriction
\begin{eqnarray}\label{LYe018}
p<\frac{4(1-s)\beta+2\alpha s}{(6-\alpha-\frac{8}{m})(1-s)\beta+\alpha s(1+\alpha+\eta-\beta)}.
\end{eqnarray}
Putting all the restrictions (\ref{LYe012}), (\ref{LYe014}) and (\ref{LYe018}) on $p$ gives
\begin{eqnarray}\label{LYe019}\mathcal{\underline{P}}<p<
\mathcal{\overline{P}}
,\end{eqnarray}
where $$\mathcal{\underline{P}}=\max\Big\{\frac{2m}{3m-4},\,\,\frac{2s}
{2s+\alpha+\eta-\beta-1}\Big\},$$
\begin{eqnarray}\label{LYe020}\mathcal{\overline{P}}&=&\min\Big\{2,
\,\,
\frac{2m}{(2-\alpha)(m-2)+m},
\,\,\frac{2}{\alpha+\eta-\beta+s},\nonumber\\&& \qquad\qquad \frac{4(1-s)\beta+2\alpha s}{(6-\alpha-\frac{8}{m})(1-s)\beta+\alpha s(1+\alpha+\eta-\beta)}\Big\}.
\end{eqnarray}
It should be noted that under the condition (\ref{YeCond}), the number $p$ would work (see the Appendix for a detailed explanation).
The estimate (\ref{LYe015}) ensures
\begin{eqnarray}\label{LYe021}N_{2}+N_{3}\leq
\frac{\widetilde{C}}{4}\|G\|_{L^{\frac{2m}{2-\alpha}}}^{m}+
C(1+\|\Lambda^{\delta+\frac{\beta}{2}}\theta\|_{L^{2} }^{2}+\|G\|_{H^{\frac{\alpha}{2}}}^{2}) (1+\|G\|_{L^{m}}^{m}).
\end{eqnarray}
Substituting the estimates (\ref{LYe002}), (\ref{LYe007}) and
(\ref{LYe021}) into (\ref{LYe001}), it leads to
\begin{eqnarray}\label{LYe022}\frac{d}{dt}\|G(t)\|_{L^{m}}^{m}+
\|G\|_{L^{\frac{2m}{2-\alpha}}}^{m}\leq C(1+\|\Lambda^{\delta+\frac{\beta}{2}}\theta\|_{L^{2} }^{2}+\|G\|_{H^{\frac{\alpha}{2}}}^{2}) (1+\|G\|_{L^{m}}^{m}).
\end{eqnarray}
Thanks to the estimates of Lemma \ref{Lemma031}, the combination of the inequality (\ref{LYe022}) with the Gronwall inequality thus leads to
\begin{eqnarray}\label{LYe023}\|G(t)\|_{L^{m}}^{m}+
\int_{0}^{T}{\|G(\tau)\|_{L^{\frac{2m}{2-\alpha}}}^{m}\,d\tau}\leq
C<\infty.\end{eqnarray}
Therefore, we conclude the proof of Lemma \ref{Lemma032}.
\end{proof}

With the bound (\ref{t331}) in hand, we are now in the position to derive the following lemmas (i.e., Lemmas \ref{Lemma033} and \ref{Lemma034}), which play
an important role in proving the main theorem and are also the main
difference compared to the recent papers \cite{YeAp,YXX}.
\begin{lemma}\label{Lemma033}
Under the assumptions stated in Lemma \ref{Lemma032}, the vorticity $\omega$ admits the following key global {\it a priori} bound
\begin{eqnarray}\label{tNew007}
\int_{0}^{T}{\|\omega(\tau)\|_{L^{\widetilde{m}}}\,d\tau}\leq
C(T,\,u_{0},\,\theta_{0}),\quad 2\leq \widetilde{m}\leq \frac{2m}{2-\alpha},
\end{eqnarray}
where $m$ is the same as in Lemma \ref{Lemma032}.
\end{lemma}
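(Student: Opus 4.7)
The plan is to decompose the vorticity via the identity
$$\omega=G+\mathcal{R}_{\alpha}\theta$$
coming from the definition $G=\omega-\mathcal{R}_{\alpha}\theta$, and to bound the two pieces separately. The bound on $G$ is extracted from Lemma \ref{Lemma032} (augmented with the $L^{\infty}_{T}L^{2}$ bound from Lemma \ref{Lemma031}), while the bound on $\mathcal{R}_{\alpha}\theta$ is extracted from Lemma \ref{Lemma031} via the Riesz transform and Gagliardo--Nirenberg interpolation.

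\textbf{Step 1: Control of $G$ in $L^{1}_{T}L^{\widetilde m}$.} From Lemma \ref{Lemma031} we have $G\in L^{\infty}_{T}L^{2}$, and from Lemma \ref{Lemma032} we have $G\in L^{\infty}_{T}L^{m}\cap L^{m}_{T}L^{\frac{2m}{2-\alpha}}$. For $\widetilde m\in[2,m]$ the spatial interpolation
$\|G\|_{L^{\widetilde m}}\le \|G\|_{L^{2}}^{1-\kappa}\|G\|_{L^{m}}^{\kappa}$
shows $G\in L^{\infty}_{T}L^{\widetilde m}$ and hence $\int_{0}^{T}\|G(\tau)\|_{L^{\widetilde m}}\,d\tau\le CT$. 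For $\widetilde m\in[m,\frac{2m}{2-\alpha}]$ I would interpolate
$\|G\|_{L^{\widetilde m}}\le \|G\|_{L^{m}}^{\kappa_{1}}\|G\|_{L^{\frac{2m}{2-\alpha}}}^{1-\kappa_{1}}$,
and then integrate in time using H\"older, which is allowed because $(1-\kappa_{1})\cdot\frac{m}{1-\kappa_{1}}=m$ times we are inside $L^{m}_{T}$ of the high norm.

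\textbf{Step 2: Control of $\mathcal{R}_{\alpha}\theta$ in $L^{1}_{T}L^{\widetilde m}$.} Write $\mathcal{R}_{\alpha}\theta=R_{1}\Lambda^{1-\alpha}\theta$, where $R_{1}$ is the first Riesz transform, bounded on $L^{p}$ for every $1<p<\infty$. Hence $\|\mathcal{R}_{\alpha}\theta\|_{L^{\widetilde m}}\le C\|\Lambda^{1-\alpha}\theta\|_{L^{\widetilde m}}$. I then apply the Gagliardo--Nirenberg inequality in $\mathbb{R}^{2}$:
$$\|\Lambda^{1-\alpha}\theta\|_{L^{\widetilde m}}\le C\,\|\Lambda^{\delta+\frac{\beta}{2}}\theta\|_{L^{2}}^{\,a}\,\|\theta\|_{L^{\infty}}^{1-a},\qquad a=\frac{\frac{2}{\widetilde m}-(1-\alpha)}{1-\delta-\frac{\beta}{2}},$$
valid provided $a\in[0,1]$, which holds by choosing $\delta$ close to $\frac{\beta}{2}$ and using $\beta>1-\alpha$. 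Since $\|\theta\|_{L^{\infty}}\le\|\theta_{0}\|_{L^{\infty}}$ by Lemma \ref{NES00} and $\Lambda^{\delta+\frac{\beta}{2}}\theta\in L^{2}_{T}L^{2}$ by Lemma \ref{Lemma031}, H\"older in time yields
$$\int_{0}^{T}\|\Lambda^{1-\alpha}\theta\|_{L^{\widetilde m}}\,d\tau\le C\Big(\int_{0}^{T}\|\Lambda^{\delta+\frac{\beta}{2}}\theta\|_{L^{2}}^{2}\,d\tau\Big)^{a/2}T^{1-a/2}\le C(T,u_{0},\theta_{0}),$$
as long as $a\le 2$, i.e., $\frac{2}{\widetilde m}\le 3-\alpha-2\delta-\beta$. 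Letting $\delta\uparrow\frac{\beta}{2}$ this amounts to $\frac{2}{\widetilde m}\le 3-\alpha-2\beta$, which is compatible with $\widetilde m\le \frac{2m}{2-\alpha}$ under the restrictions $\beta<\frac{\alpha}{2}$ and $\beta<\frac{3\alpha^{2}+4\alpha-4}{8(1-\alpha)}$ of Theorem \ref{Th1} together with (\ref{YeCond}).

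\textbf{Main obstacle.} The delicate part is the verification that $a\le 2$ simultaneously for the whole range $2\le\widetilde m\le\frac{2m}{2-\alpha}$. The worst case is the upper endpoint $\widetilde m=\frac{2m}{2-\alpha}$, where $\frac{2}{\widetilde m}=\frac{2-\alpha}{m}$, and the resulting inequality
$$\frac{2-\alpha}{m}-(1-\alpha)\le 2\Big(1-\delta-\tfrac{\beta}{2}\Big)$$
has to be reconciled with the upper bound $m<\frac{1}{1-\alpha}$ from (\ref{YeCond}) and with $\delta$ arbitrarily close to $\frac{\beta}{2}$. All the other ingredients (spatial interpolation, Riesz boundedness, H\"older in time) are routine; this GN--exponent matching is where the structural constraints on $(\alpha,\beta,m)$ imposed in Lemma \ref{Lemma032} really get used.
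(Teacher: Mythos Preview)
Your Step~1 is fine, but Step~2 has a real gap. The Gagliardo--Nirenberg inequality
\[
\|\Lambda^{1-\alpha}\theta\|_{L^{\widetilde m}}\le C\,\|\Lambda^{\delta+\frac{\beta}{2}}\theta\|_{L^{2}}^{\,a}\,\|\theta\|_{L^{\infty}}^{1-a}
\]
is not guaranteed by the scaling relation together with $a\in[0,1]$ alone; the fractional version of HMOW (used in the paper right after \eqref{LYe011}) carries the extra derivative condition $1-\alpha < a\big(\delta+\tfrac{\beta}{2}\big)$. Substituting your value of $a$, this forces $\widetilde m<\dfrac{2(\delta+\beta/2)}{1-\alpha}<\dfrac{2\beta}{1-\alpha}$. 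For admissible parameters this is strictly below the required upper endpoint $\frac{2m}{2-\alpha}$: e.g.\ $\alpha=0.8$, $\beta=0.3$, $m=2.5$ give the bound $3$ versus the target $\approx4.17$. Replacing $L^{\infty}$ by $L^{2}$ (or any $L^{q}$) as the low endpoint does not help: a Littlewood--Paley computation shows that any estimate of the form $\|\Lambda^{1-\alpha}\theta\|_{L^{\widetilde m}}\lesssim\|\Lambda^{\delta+\beta/2}\theta\|_{L^{2}}^{\gamma}\|\theta\|_{L^{q}}^{1-\gamma}$ still requires $\gamma>\frac{1-\alpha}{\delta+\beta/2}$, and maximizing $\gamma$ over $q$ returns the same constraint $\widetilde m<\frac{2(\delta+\beta/2)}{1-\alpha}$. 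In short, the regularity supplied by Lemma~\ref{Lemma031} (fewer than $\beta$ derivatives of $\theta$ in $L^{2}_{T}L^{2}$) is simply not enough to control $\mathcal{R}_{\alpha}\theta$ in $L^{1}_{T}L^{\widetilde m}$ over the whole range, and your ``main obstacle'' (the time-H\"older condition $a\le 2$) is not the binding constraint.

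The paper proceeds by a different mechanism: it does not bound $\mathcal{R}_{\alpha}\theta$ directly from Lemma~\ref{Lemma031}, but feeds the transport--diffusion smoothing estimate \eqref{t206} of Lemma~\ref{Lem26} into the Besov decomposition of $\mathcal{R}_{\alpha}\theta$. Since \eqref{t206} controls $\sup_{k}2^{k\beta/\rho}\|\Delta_{k}\theta\|_{L^{\rho}_{t}L^{\widetilde m}}$ in terms of $\|\omega\|_{L^{1}_{t}L^{\widetilde m}}$ itself, one arrives at a closed inequality $\|\omega\|_{L^{1}_{t}L^{\widetilde m}}\le C(t)+Ct^{1-1/\rho}\|\omega\|_{L^{1}_{t}L^{\widetilde m}}$, absorbs the right-hand side on a short interval, and iterates. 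That bootstrap through the smoothing effect is the missing idea in your plan.
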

\begin{proof}[\textbf{Proof of Lemma \ref{Lemma033}}]
Notice the fact $\beta>1-\alpha$, then there exist some $\rho>1$ such that $\frac{\beta}{\rho}>1-\alpha$. Recalling the definition of $G$ and the bound (\ref{t331}), we have
\begin{eqnarray}\label{t328}
\|\omega\|_{L_{t}^{1}L^{\widetilde{m}}}&\leq& \|G\|_{L_{t}^{1}L^{\widetilde{m}}}+\|\mathcal {R}_{\alpha}\theta\|_{L_{t}^{1}L^{\widetilde{m}}}\nonumber\\&\leq&
 C(t)+\|\mathcal {R}_{\alpha}\theta\|_{L_{t}^{1}B_{\widetilde{m},1}^{0}}\nonumber\\&\leq&
 C(t)+Ct^{1-\frac{1}{\rho}}\|\mathcal {R}_{\alpha}\theta\|_{\widetilde{L}_{t}^{\rho}B_{\widetilde{m},1}^{0}}.
\end{eqnarray}
The Littlewood-Paley technique and the estimate (\ref{t206}) allow us to show
\begin{eqnarray}\label{t329}
\|\mathcal {R}_{\alpha}\theta\|_{\widetilde{L}_{t}^{\rho}B_{\widetilde{m},1}^{0}}&\leq& \|\Delta_{-1}\mathcal {R}_{\alpha}\theta\|_{\widetilde{L}_{t}^{\rho}B_{\widetilde{m},1}^{0}}
+\|(\mathbb{I}-\Delta_{-1})\theta\|_{\widetilde{L}_{t}^{\rho}B_{\widetilde{m},1}^{1-\alpha}}
\nonumber\\&\leq&
C\|\theta\|_{L_{t}^{\rho}L^{\widetilde{m}}}+
\|(\mathbb{I}-\Delta_{-1})\theta\|_{\widetilde{L}_{t}^{\rho}B_{\widetilde{m},\infty}
^{\frac{\beta}{\rho}}}\nonumber\\&\leq&
C(t)+\sup_{k\in \mathbb{N}}2^{k\frac{\beta}{\rho}}
\|\Delta_{k}\theta\|_{L_{t}^{\rho}L^{\widetilde{m}}}\nonumber\\&\leq&
C(t)+C(\|\theta_{0}\|_{L^{\widetilde{m}}}+\|\theta_{0}\|_{L^{\infty}}
\|\omega\|_{L_{t}^{1}L^{\widetilde{m}}}).
\end{eqnarray}
Combining (\ref{t328}) and (\ref{t329}) yields
\begin{eqnarray}\label{t330}
\|\omega\|_{L_{t}^{1}L^{\widetilde{m}}}\leq C(t)+Ct^{1-\frac{1}{\rho}}\|\omega\|_{L_{t}^{1}L^{\widetilde{m}}},
\end{eqnarray}
where constant $C$ is independent of $t$. Denoting $T_{0}:=(2C)^{-\frac{\rho}{\rho-1}}$,
one can conclude that for any $t\leq T_{0}$
\begin{eqnarray}
\|\omega\|_{L_{t}^{1}L^{\widetilde{m}}}\leq 2C(t).\nonumber
\end{eqnarray}
Adopting the same argument, we get that for any $t\leq T_{0}$
\begin{eqnarray}
\|\omega\|_{L_{[T_{0},t+T_{0}]}^{1}L^{\widetilde{m}}}\leq 2C(t+T_{0}).\nonumber
\end{eqnarray}
By the same iteration, it ensures for any $t\leq T$
\begin{eqnarray}
\|\omega\|_{L_{T}^{1}L^{\widetilde{m}}}\leq C(T).\nonumber
\end{eqnarray}
This completes the proof of Lemma \ref{Lemma033}.
\end{proof}
Based on the estimate (\ref{tNew007}), the next lemma is concerned
with the global {\it a priori} bound
$\int_{0}^{T}{\|G(\tau)\|_{B_{\infty,1}^{0}}\,d\tau}$.
\begin{lemma}\label{Lemma034}
Assume $(u_{0},\,\theta_{0})$ satisfies the assumptions stated in Theorem \ref{Th1}.
Under the assumption $1-\alpha
<\beta<\min\Big\{3-3\alpha,\,\,\frac{\alpha}{2},\,\,
\frac{3\alpha^{2}+4\alpha-4}{8(1-\alpha)}\Big\}$ with
$\frac{10-2\sqrt{10}}{5}<\alpha<1$, the quantity $G$ admits the following key global {\it a priori} bound
\begin{eqnarray}\label{tNew008}
\int_{0}^{T}{\|G(\tau)\|_{B_{\infty,1}^{0}}\,d\tau}\leq
C(T,\,u_{0},\,\theta_{0}).
\end{eqnarray}
\end{lemma}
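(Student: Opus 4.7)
My strategy is to perform a dyadic analysis on the coupled equation (\ref{t305}) for $G$. Applying $\Delta_j$ for $j\geq 0$, testing against $|\Delta_j G|^{p-2}\Delta_j G$ for some finite $p>2/\alpha$ to be chosen, and invoking the generalized Bernstein (C\'ordoba--C\'ordoba type) inequality together with $\nabla\cdot u=0$, I would obtain the damping inequality
\begin{align*}
\frac{d}{dt}\|\Delta_j G\|_{L^p}+c\,2^{\alpha j}\|\Delta_j G\|_{L^p}&\leq\|[\Delta_j,u\cdot\nabla]G\|_{L^p}+\|\Delta_j[\mathcal{R}_\alpha,u\cdot\nabla]\theta\|_{L^p}\\
&\quad+\|\Delta_j\Lambda^{\beta-\alpha}\partial_x\theta\|_{L^p}.
\end{align*}
Duhamel against the kernel $e^{-c(t-\tau)2^{\alpha j}}$ followed by the $L^1_T$-integration, Bernstein ($\|\Delta_j G\|_{L^\infty}\lesssim 2^{2j/p}\|\Delta_j G\|_{L^p}$), and summation over $j\geq 0$ reduces the task to estimating each of the three right-hand sides in a Besov-in-time norm whose $j$-weighted $\ell^1$ sum is finite; the low-frequency block $\Delta_{-1}G$ is handled separately using the $L^m$ bound from Lemma \ref{Lemma032} together with Bernstein.

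For the right-hand side I would systematically use the decomposition $u=u_G+u_\theta$ together with the a priori bounds already secured. The transport commutator $[\Delta_j,u\cdot\nabla]G$ is treated by standard Besov commutator estimates: the piece coming from $u_G$ gains from $\|\nabla u_G\|_{L^{\widetilde m}}\lesssim \|G\|_{L^{\widetilde m}}$ (Riesz transform boundedness) combined with Lemma \ref{Lemma033}, while the $u_\theta$ piece uses the smoothing of $\mathcal{R}_\alpha$ and the $L^{\widetilde m}$--$L^{2m/(2-\alpha)}$ bounds on $G$ in (\ref{t331}). The Riesz--transport commutator $[\mathcal{R}_\alpha,u\cdot\nabla]\theta$ is estimated via Lemma \ref{Lem23}, applying (\ref{SWt201}) to the $u_G$ contribution and (\ref{SWt202}) to the $u_\theta$ contribution with indices $s_1<1-\alpha$ and $s_2$ adjusted to match the damping gain. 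The linear forcing $\Lambda^{\beta-\alpha}\partial_x\theta$ has order $1+\beta-\alpha<1$ (since $\beta<\alpha$); its $L^p$ norm is controlled by a fractional Gagliardo--Nirenberg interpolation between $\|\Lambda^{\delta+\beta/2}\theta\|_{L^2_T L^2}$ from Lemma \ref{Lemma031} and $\|\theta\|_{L^\infty}$ from (\ref{t301}), in the same spirit as the estimate (\ref{LYe011}) used to treat $N_2+N_3$ in the proof of Lemma \ref{Lemma032}. A Gr\"onwall step then yields (\ref{tNew008}).

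The main obstacle is to close the estimate precisely at the endpoint index $B^0_{\infty,1}$: the Bernstein loss $2^{2j/p}$ must be absorbed by the damping gain $2^{-\alpha j}$, forcing $p$ to be large, whereas the Sobolev/Besov norms of the RHS must remain summable in $p$ and consistent with the limited thermal regularity $\delta+\tfrac{\beta}{2}<\beta$ supplied by Lemma \ref{Lemma031}. This is exactly where the triple restriction $\beta<f(\alpha)$ intervenes: $\beta<3-3\alpha$ caps the regularity cost of $\Lambda^{\beta-\alpha}\partial_x\theta$; $\beta<\alpha/2$ ensures that the thermal smoothing dominates in the Gagliardo--Nirenberg interpolation; and the new constraint $\beta<\frac{3\alpha^2+4\alpha-4}{8(1-\alpha)}$ together with $\alpha>\frac{10-2\sqrt{10}}{5}$ guarantees the existence of a finite $p$ simultaneously satisfying the conditions (\ref{YeCond}), all exponent balances in Lemmas \ref{Lem23} and \ref{Lem25}, and the $\ell^1$-summability over $j\geq 0$. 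Once such a $p$ is fixed, assembling the three estimates and applying Gr\"onwall to $\|G\|_{L^1_T B^0_{\infty,1}}$ completes the proof.
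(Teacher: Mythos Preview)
Your overall framework---localize (\ref{t305}) with $\Delta_j$, test against $|\Delta_j G|^{r-2}\Delta_j G$, use the generalized Bernstein lower bound, apply Duhamel, and then sum to reach $B^0_{\infty,1}$---is exactly the paper's scheme. However, two of your three right-hand side treatments diverge from the paper in ways that matter.

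The most serious issue is the linear forcing $\Delta_j\Lambda^{\beta-\alpha}\partial_x\theta$. You propose Gagliardo--Nirenberg between $\|\Lambda^{\delta+\beta/2}\theta\|_{L^2_TL^2}$ and $\|\theta\|_{L^\infty}$, ``in the same spirit as (\ref{LYe011})''. But that analogy breaks down: in Lemma~\ref{Lemma032} one is estimating a \emph{trilinear} integral and can shift $\eta>\beta$ derivatives onto $|G|^{m-2}G$, leaving only $1+\beta-\alpha-\eta<1-\alpha$ derivatives on $\theta$. Here you must control $\|\Delta_j\Lambda^{1+\beta-\alpha}\theta\|_{L^r}$ directly, and since $1+\beta-\alpha>\beta>\delta+\tfrac{\beta}{2}$, Gagliardo--Nirenberg alone cannot supply the needed regularity; moreover the $L^2_T$ integrability of $\Lambda^{\delta+\beta/2}\theta$ is strictly weaker than what the damping absorbs. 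The paper instead invokes the smoothing-effect Lemma~\ref{Lem26}: $\sup_{k\geq 0}2^{\beta k}\|\Delta_k\theta\|_{L^1_tL^r}\leq C(\|\theta_0\|_{L^r}+\|\theta_0\|_{L^\infty}\|\omega\|_{L^1_tL^r})$, which delivers a full $\beta$ derivatives in $L^1_t$ and feeds precisely on the $\|\omega\|_{L^1_tL^r}$ bound from Lemma~\ref{Lemma033}. This, together with the damping, yields $J_3\leq C(t)2^{-(\alpha-1)k}$.

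Second, for the Riesz commutator the paper does \emph{not} use Lemma~\ref{Lem23}: those are trilinear-integral bounds, ill-suited to estimating $\|\Delta_k[\mathcal R_\alpha,u\cdot\nabla]\theta\|_{L^r}$. The paper uses (\ref{t203}) of Lemma~\ref{Lem24} with $s=\alpha-1$, giving $J_1\leq C(t)2^{-(\alpha-1)k}$ directly from $\|\omega\|_{L^1_tL^r}$. Likewise the transport commutator is handled by (\ref{tt204}), producing the feedback term $\|G\|_{B^{2/r}_{r,1}}$ that drives the absorption argument: one multiplies by $2^{(2/r)k}$, splits at $k=N_0$, and absorbs the high modes provided $2\alpha-1-\tfrac{2}{r}>0$. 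It is \emph{this} condition, combined with $r\leq\tfrac{2m}{2-\alpha}$ and $\tfrac{2}{m}\leq 1-\alpha+\tfrac{2}{r}$, that forces $m>\tfrac{2}{\alpha}$; substituting $m=\tfrac{2}{\alpha}$ into (\ref{YeCond}) is precisely what produces the constraint $\beta<\frac{3\alpha^2+4\alpha-4}{8(1-\alpha)}$ and hence $\alpha>\frac{10-2\sqrt{10}}{5}$. Your attribution of the three pieces of $f(\alpha)$ to different mechanisms is not how they actually arise.
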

\begin{proof}[\textbf{Proof of Lemma \ref{Lemma034}}]
Apply inhomogeneous blocks $\Delta_{k}$ ($k\in \mathbb{N}$) operator
to the combined equation (\ref{t305}) to obtain
\begin{eqnarray}\label{t333}\partial_{t}\Delta_{k}G+(u\cdot\nabla)\Delta_{k}G
+\Lambda^{\alpha}\Delta_{k}G=\Delta_{k}[\mathcal {R}_{\alpha},\,u\cdot\nabla]
\theta-[\Delta_{k},\,u\cdot\nabla]G+\Delta_{k}\Lambda^{\beta-\alpha}\partial_{x}\theta.\end{eqnarray}
For notational convenience, we denote
$$f_{k}:=\Delta_{k}[\mathcal {R}_{\alpha},\,u\cdot\nabla]\theta-[\Delta_{k},\,u\cdot\nabla]G
+\Delta_{k}\Lambda^{\beta-\alpha}\partial_{x}\theta$$
Multiplying the equation (\ref{t333}) by $|\Delta_{k}G|^{r-2}\Delta_{k}G$ and using the divergence-free condition, we
can conclude that
\begin{eqnarray}\label{t334}\frac{1}{r}\frac{d}{dt}\|\Delta_{k}G\|_{L^{r}}^{r}
+\int_{\mathbb{R}^{2}}
(\Lambda^{\alpha}\Delta_{k}G)|\Delta_{k}G|^{r-2}\Delta_{k}G\,dx=\int_{\mathbb{R}^{2}}
{f_{k}\,\,|\Delta_{k}G|^{r-2}\Delta_{k}G\,dx},\end{eqnarray}
where $2\leq r\leq \frac{2m}{2-\alpha}$ is to be fixed hereafter. Thanks to (\ref{tNew007}), we have
\begin{eqnarray}\label{OKG}\int_{0}^{T}{\|\omega(\tau)\|_{L^{r}}\,d\tau}<\infty,\qquad2\leq r\leq \frac{2m}{2-\alpha}.
\end{eqnarray}
By the following lower bound (see \cite{CMZ})
$$\int_{\mathbb{R}^{2}}
(\Lambda^{\alpha}\Delta_{k}G)|\Delta_{k}G|^{r-2}\Delta_{k}G\,dx\geq c2^{\alpha k}\|\Delta_{k}G\|_{L^{r}}^{r},\quad k\geq0, $$
for an absolute constant $c>0$ independent of $k$, one arrives at
\begin{eqnarray}\frac{1}{r}\frac{d}{dt}\|\Delta_{k}G\|_{L^{r}}^{r}
+c2^{\alpha k}\|\Delta_{k}G\|_{L^{r}}^{r}\leq C \|f_{k}\|_{L^{r}}
\|\Delta_{k}G\|_{L^{r}}^{r-1}.\nonumber\end{eqnarray} Consequently, making use of the
Gronwall inequality to the above inequality leads to
\begin{eqnarray}\label{t335}\|\Delta_{k}G(t)\|_{L^{r}}
\leq C e^{-ct2^{\alpha k}}\|\Delta_{k}G_{0}\|_{L^{r}}+C\int_{0}^{t}{ e^{-c(t-\tau)2^{\alpha k}}\|f_{k}(\tau)\|_{L^{r}}\,d\tau}.\end{eqnarray}
Integrating over time variable and using the convolution Young inequality yield
\begin{eqnarray}\int_{0}^{t}{\|\Delta_{k}G(\tau)\|_{L^{r}}\,d\tau}
&\leq& C 2^{-\alpha
k}\|\Delta_{k}G_{0}\|_{L^{r}}+C2^{-\alpha k}\int_{0}^{t}{
\|f_{k}(\tau)\|_{L^{r}}\,d\tau}\nonumber\\&\leq&
C 2^{-\alpha
k}\|\Delta_{k}G_{0}\|_{L^{r}}+C2^{-\alpha k}(J_{1}+J_{2}+J_{3}),
\nonumber\end{eqnarray}
where
$$J_{1}=\int_{0}^{t}{ \|\Delta_{k}[\mathcal
{R}_{\alpha},\,u\cdot\nabla]\theta(\tau)\|_{L^{r}}\,d\tau},\quad J_{2}=\int_{0}^{t}{
\|[\Delta_{k},\,u\cdot\nabla]G(\tau)\|_{L^{r}}\,d\tau},$$
$$J_{3}=\int_{0}^{t}{
\|\Delta_{k}\Lambda^{\beta-\alpha}\partial_{x}\theta(\tau)\|_{L^{r}}\,d\tau}.$$
According to the estimate (\ref{t203}) with $s=\alpha-1$, we immediately get
\begin{eqnarray}\label{t336}
J_{1}&\leq& 2^{-(\alpha-1)k}\int_{0}^{t}{2^{(\alpha-1)k} \|\Delta_{k}[\mathcal
{R}_{\alpha},\,u\cdot\nabla]\theta(\tau)\|_{L^{r}}\,d\tau}\nonumber\\
&\leq&C2^{-(\alpha-1)k}\int_{0}^{t}{\|\nabla u(\tau)\|_{L^{r}}(\|\theta_{0}\|_{L^{2}}+\|\theta_{0}\|_{L^{\infty}})\,d\tau}\nonumber\\
&\leq&C2^{-(\alpha-1)k}\int_{0}^{t}{\|\omega(\tau)\|_{L^{r}}\,d\tau}\nonumber\\
&\leq&C(t)2^{-(\alpha-1)k},
\end{eqnarray}
where in the last line we have used the estimate (\ref{OKG}).
By means of the commutator estimate (\ref{tt204}), we find
\begin{eqnarray}\label{t337}
J_{2}&\leq& 2^{-(\alpha-1)k}\int_{0}^{t}{2^{(\alpha-1)k} \|[\Delta_{k},\,u\cdot\nabla]G(\tau)\|_{L^{r}}\,d\tau}\nonumber\\
&\leq&C2^{-(\alpha-1)k}\int_{0}^{t}{(\|\nabla u\|_{B_{r,\infty}^{\alpha-1}}+\|u\|_{L^{2}})\|G\|_{B_{\infty,\infty}^{0}}\,d\tau}
\nonumber\\
&\leq&C2^{-(\alpha-1)k}\int_{0}^{t}{(\|\omega\|_{B_{r,\infty}^{\alpha-1}}+\|u\|_{L^{2}})
\|G\|_{B_{r,\infty}^{\frac{2}{r}}}
\,d\tau}\nonumber\\
&\leq&C2^{-(\alpha-1)k}\int_{0}^{t}{(\|G\|_{B_{r,\infty}^{\alpha-1}}+
\|\mathcal{R}_{\alpha}\theta\|_{B_{r,\infty}^{\alpha-1}}+\|u\|_{L^{2}})
\|G\|_{B_{r,1}^{\frac{2}{r}}}
\,d\tau}
\nonumber\\
&\leq&C2^{-(\alpha-1)k}\int_{0}^{t}{(\|G\|_{L^{m}}+
\|\theta\|_{L^{r}}+\|u\|_{L^{2}}+1)
\|G\|_{B_{r,1}^{\frac{2}{r}}}
\,d\tau}\nonumber\\
&\leq&C(t)2^{-(\alpha-1)k}\int_{0}^{t}{
\|G\|_{B_{r,1}^{\frac{2}{r}}}
\,d\tau},
\end{eqnarray}
where we have used the following estimate
$$\|G\|_{B_{r,\infty}^{\alpha-1}}\leq C\|G\|_{L^{m}}<\infty,\qquad \frac{2}{m}\leq 1-\alpha+\frac{2}{r}.$$
Finally, by the estimate (\ref{t206}) and the estimate (\ref{OKG})
\begin{eqnarray}\label{t338}
J_{3}&\leq&\int_{0}^{t}{
\|\Delta_{k}\Lambda^{\beta-\alpha}\partial_{x}\theta(\tau)\|_{L^{r}}\,d\tau}\nonumber\\
&\leq&C2^{-(\alpha-1)k}\int_{0}^{t}{2^{\beta k}
\|\Delta_{k}\theta(\tau)\|_{L^{r}}\,d\tau}\nonumber\\
&\leq&C2^{-(\alpha-1)k}
(\|\theta_{0}\|_{L^{r}}+\|\theta_{0}\|_{L^{\infty}}\|\omega\|_{L_{t}^{1}L^{r}})\nonumber\\
&\leq&C(t)2^{-(\alpha-1)k}.
\end{eqnarray}
Putting all the above mentioned estimates $J_{1},\,J_{2}$ and $J_{3}$ together, one gets
\begin{eqnarray}\label{t339}\int_{0}^{t}{\|\Delta_{k}G(\tau)\|_{L^{r}}\,d\tau}
&\leq&
C 2^{-\alpha
k}\|\Delta_{k}G_{0}\|_{L^{r}}+C(t)2^{-(2\alpha-1)k}\nonumber\\&&+C(t)
2^{-(2\alpha-1)k}\int_{0}^{t}{
\|G\|_{B_{r,1}^{\frac{2}{r}}}
\,d\tau}.
\end{eqnarray}
In view of the definition of the Besov space, we deduce that
\begin{eqnarray}\label{t340}\|G\|_{L_{t}^{1}B_{r,1}^{\frac{2}{r}}}
&\leq& \sum_{k<N_{0}}2^{\frac{2}{r}k}\|\Delta_{k}G\|_{L_{t}^{1}L^{r}}+\sum_{k\geq N_{0}}2^{\frac{2}{r}k}\|\Delta_{k}G\|_{L_{t}^{1}L^{r}}
\nonumber\\&\leq&
C2^{\frac{2}{r}N_{0}}\|G\|_{L_{t}^{1}L^{r}}+C(t)\sum_{k\geq N_{0}}2^{-(\alpha-\frac{2}{r})
k}\|\Delta_{k}G_{0}\|_{L^{r}}+C(t)\sum_{k\geq N_{0}}2^{-(2\alpha-1-\frac{2}{r})k}
\nonumber\\&&+C(t)\sum_{k\geq N_{0} }2^{-(2\alpha-1-\frac{2}{r})k}\|G\|_{L_{t}^{1}B_{r,1}^{\frac{2}{r}}}\nonumber\\&\leq&
C2^{\frac{2}{r}N_{0}}\|G\|_{L_{t}^{1}L^{r}}+C(t)
+C(t)2^{-(2\alpha-1-\frac{2}{r})N_{0}}\|G\|_{L_{t}^{1}B_{r,1}^{\frac{2}{r}}},
\end{eqnarray}
where we have applied the following restriction
\begin{eqnarray}\label{New02} 2\alpha-1-\frac{2}{r}>0.\end{eqnarray}
Choosing $N_{0}$ as
$$\frac{1}{4}\leq C(t)2^{-(2\alpha-1-\frac{2}{r})N_{0}}\leq\frac{1}{2},$$
we conclude
$$\|G\|_{L_{t}^{1}B_{r,1}^{\frac{2}{r}}}\leq C(t)<\infty.$$
By the embedding theorem, we arrive at
$$\|G\|_{L_{t}^{1}B_{\infty,1}^{0}}\leq C \|G\|_{L_{t}^{1}B_{r,1}^{\frac{2}{r}}}\leq C(t)<\infty.$$
Finally, let us check that the numbers $r$ and $m$ can be fixed. Combining all the requirement on the number $r$, we have
$$2\alpha-1-\frac{2}{r}>0\Rightarrow \frac{2}{r}<2\alpha-1,\qquad \frac{2}{m}\leq 1-\alpha+\frac{2}{r}\Rightarrow  \frac{2}{m}+\alpha-1\leq \frac{2}{r},$$
$$2\leq r\leq \frac{2m}{2-\alpha}\Rightarrow \frac{2-\alpha}{m}\leq \frac{2}{r}\leq 1.$$
Therefore, it gives rise to
$$\max\Big\{\frac{2}{m}+\alpha-1,\,\,\frac{2-\alpha}{m}\Big\}\leq\frac{2}{r}< \min\Big\{2\alpha-1,\,\,1\Big\}=2\alpha-1,$$
which would work as long as
\begin{eqnarray}\label{Condition3}
m>\max\Big\{\frac{2}{\alpha},\,\,\frac{2-\alpha}{2\alpha-1}\Big\}=\frac{2}{\alpha},\qquad \Big(\alpha>\sqrt{3}-1\approx 0.7321\Big).
\end{eqnarray}
Recall the condition (\ref{YeCond}), namely
\begin{equation}\label{Condition4}
\left\{\aligned
&\frac{4}{3-\alpha-\beta}<m<\min\Big\{4,\,\,\frac{1}{1-\alpha}\Big\}, \\
&\big(2(2-\alpha)\beta-3\alpha+2\big)m<4(2-\alpha)\beta,\\
&(4+8\beta-4\alpha-3\alpha^{2})m<16\beta.
\endaligned\right.
\end{equation}
Noticing $\alpha>\frac{2}{3}$, we substitute the number $m=\frac{2}{\alpha}$ into (\ref{Condition4}) to get
\begin{eqnarray}\label{NR2}1-\alpha
<\beta<\min\Big\{3-3\alpha,\,\,\frac{\alpha}{2},\,\,\frac{3\alpha-2}{2(2-\alpha)(1-\alpha)},\,\,
\frac{3\alpha^{2}+4\alpha-4}{8(1-\alpha)}\Big\}.
\end{eqnarray}
Thanks to the following simple fact
$$\frac{3\alpha-2}{2(2-\alpha)(1-\alpha)}>\frac{3\alpha^{2}+4\alpha-4}{8(1-\alpha)},\quad \mbox{for any}\quad \alpha>\frac{2}{3},$$
the restriction (\ref{NR2}) reduces to 
\begin{eqnarray}1-\alpha
<\beta<\min\Big\{3-3\alpha,\,\,\frac{\alpha}{2},\,\,
\frac{3\alpha^{2}+4\alpha-4}{8(1-\alpha)}\Big\}.\nonumber
\end{eqnarray}
It is not difficult to check that the assumption for $\beta$ will work
as long as $$1-\alpha<\frac{3\alpha^{2}+4\alpha-4}{8(1-\alpha)}\Rightarrow\alpha>\frac{10-2\sqrt{10}}{5}
\approx0.7351.$$
Here it is worth particularly mentioning that this is the only place where in the proof we need the key assumption
$\alpha>\frac{10-2\sqrt{10}}{5}$.
If inequality (\ref{Condition4}) holds true when $m=\frac{2}{\alpha}$, then one may take $m=\frac{2}{\alpha}+\epsilon$ for some sufficiently small $\epsilon$ ($\epsilon>0$ may depend on $\alpha$ and $\beta$) such that both inequalities (\ref{Condition4}) and (\ref{Condition3}) are fulfilled. Such a choice of $\epsilon>0$ is possible because both inequalities (\ref{Condition4}) and (\ref{Condition3}) are strict.
Consequently, this completes the proof of Lemma \ref{Lemma034}.
\end{proof}

Finally, we would like to establish the following global
{\it a priori} bound  $\int_{0}^{T}{\|\omega(\tau)\|_{B_{\infty,1}^{0}}\,d\tau}$.
\begin{lemma}\label{Lemma035}
Under the assumptions stated in Lemma \ref{Lemma034}, the vorticity $\omega$ admits the following key global {\it a priori} bound
\begin{eqnarray}\label{tNew008}
\int_{0}^{T}{\|\omega(\tau)\|_{B_{\infty,1}^{0}}\,d\tau}\leq
C(T,\,u_{0},\,\theta_{0}).
\end{eqnarray}
\end{lemma}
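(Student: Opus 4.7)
The approach is to exploit the decomposition $\omega = G + \mathcal{R}_\alpha \theta$ that has been used throughout this section. Since Lemma \ref{Lemma034} already delivers $\|G\|_{L^1_T B^0_{\infty,1}} \leq C(T,u_0,\theta_0)$, the task immediately reduces to establishing
$$
\|\mathcal{R}_\alpha \theta\|_{L^1_T B^0_{\infty,1}} \leq C(T,u_0,\theta_0).
$$

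The plan is to unpack the non-homogeneous Besov norm via Littlewood-Paley. The low-frequency block $\Delta_{-1}\mathcal{R}_\alpha\theta$ is controlled pointwise in $t$ by any $L^p$-norm of $\theta$, uniform in time by the maximum principle. For the high frequencies $k\geq 0$, the pointwise multiplier behavior of $\mathcal{R}_\alpha=\partial_x\Lambda^{-\alpha}$ combined with Bernstein gives
$$
\|\Delta_k \mathcal{R}_\alpha\theta\|_{L^\infty}\lesssim 2^{(1-\alpha)k}\|\Delta_k\theta\|_{L^\infty}\lesssim 2^{(1-\alpha+2/p)k}\|\Delta_k\theta\|_{L^p},
$$
and the smoothing effect Lemma \ref{Lem26} with $\rho=1$ converts this into
$$
\sup_{k\in\mathbb{N}}2^{k\beta}\|\Delta_k\theta\|_{L^1_T L^p}\leq C\bigl(\|\theta_0\|_{L^p}+\|\theta_0\|_{L^\infty}\|\omega\|_{L^1_T L^p}\bigr),
$$
whose right-hand side is finite by Lemma \ref{Lemma033} precisely when $p\in[2,\,2m/(2-\alpha)]$. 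Summing in $k$,
$$
\sum_{k\geq 0}\|\Delta_k\mathcal{R}_\alpha\theta\|_{L^1_T L^\infty}\lesssim C(T)\sum_{k\geq 0}2^{(1-\alpha+2/p-\beta)k},
$$
which is a convergent geometric series exactly when $p>2/(\alpha+\beta-1)$. Adding back the $\Delta_{-1}$ contribution finishes the bound.

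The main obstacle is the parameter matching: the window
$$
\frac{2}{\alpha+\beta-1}<p\leq\frac{2m}{2-\alpha}
$$
must be non-empty for some admissible $m$ satisfying \eqref{YeCond}, and the verification is of the same flavor as the Appendix calculation invoked for Lemma \ref{Lemma032}. If this direct route should prove too tight in some subregion of the parameter square, the backup plan is to run the argument of Lemma \ref{Lemma034} directly on the vorticity equation $\partial_t \omega + (u\cdot\nabla)\omega+\Lambda^\alpha \omega=\partial_x \theta$: apply $\Delta_k$, use the Gronwall-type smoothing at dyadic level $k$, absorb the forcing $\Delta_k\partial_x\theta$ through Lemma \ref{Lem26}, control $[\Delta_k,u\cdot\nabla]\omega$ through \eqref{tt204} with $\|\omega\|_{B^{\alpha-1}_{r,\infty}}$ estimated uniformly in $t$ via the splitting $\omega=G+\mathcal{R}_\alpha\theta$ (which reduces it to the uniform bounds on $\|G\|_{L^m}$ and $\|\theta\|_{L^r}$), and close a self-absorption argument in $\|\omega\|_{L^1_T B^{2/r}_{r,1}}$ by choosing a cutoff $N_0$ large, finally embedding $B^{2/r}_{r,1}\hookrightarrow B^0_{\infty,1}$.
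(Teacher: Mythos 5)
There is a genuine gap, and it sits exactly at the point you flagged as ``parameter matching''. The window $\frac{2}{\alpha+\beta-1}<p\leq\frac{2m}{2-\alpha}$ is empty for \emph{every} admissible choice of $(\alpha,\beta,m)$, not just in some corner: since $\beta<\min\{3-3\alpha,\ \alpha/2\}$, one checks that $\frac{2-\alpha}{\alpha+\beta-1}>4$ (for $\alpha\leq 6/7$ use $\beta<\alpha/2$, for $\alpha>6/7$ use $\beta<3-3\alpha$), while \eqref{YeCond} forces $m<\min\{4,\frac{1}{1-\alpha}\}\leq 4$; hence $p\leq\frac{2m}{2-\alpha}<\frac{2}{\alpha+\beta-1}$ always, and Lemma \ref{Lemma033} never reaches the integrability needed to make your geometric series converge. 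Your backup plan does not repair this: in the $\omega$-equation the forcing is $\partial_x\theta$ with no smoothing factor $\Lambda^{\beta-\alpha}$, so after Duhamel and Lemma \ref{Lem26} the dyadic contribution is of size $2^{(1-\alpha-\beta)k}$, and summing it against $2^{\frac{2}{r}k}$ again requires $r>\frac{2}{\alpha+\beta-1}$; but then both the low-frequency block $2^{\frac{2}{r}N_0}\|\omega\|_{L^1_tL^{r}}$ and the right-hand side of the smoothing estimate involve $\|\omega\|_{L^1_tL^{r}}$ at precisely this large $r$, which is again outside the range of Lemma \ref{Lemma033} and cannot be absorbed (its coefficient grows with $N_0$). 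This is the reason the paper works with $G$, whose forcing $\Lambda^{\beta-\alpha}\partial_x\theta$ only needs $\frac{2}{r}<2\alpha-1$, in Lemma \ref{Lemma034}.

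The missing idea is an intermediate bootstrap: before summing frequencies you must upgrade Lemma \ref{Lemma033} to $\|\omega\|_{L^1_TL^{r_0}}\leq C(T)$ for an \emph{arbitrarily large} $r_0>\frac{2}{\alpha+\beta-1}$. The paper does this by writing $\|\omega\|_{L^1_tL^{r_0}}\leq\|G\|_{L^1_t(B^0_{\infty,1}\cap L^2)}+\|\mathcal{R}_\alpha\theta\|_{L^1_tB^0_{r_0,1}}$, where the first term is finite for every $r_0$ thanks to the bound of Lemma \ref{Lemma034} just proved (this is what Lemma \ref{Lemma034} buys you and your plan never uses), and the second term is estimated in $\widetilde{L}^\rho_tB^0_{r_0,1}$ with some $\rho>1$ satisfying $\frac{\beta}{\rho}>1-\alpha$, producing a small prefactor $t^{1-1/\rho}$ in front of $\|\omega\|_{L^1_tL^{r_0}}$ that is absorbed on small time intervals and then iterated up to time $T$, exactly as in the proof of Lemma \ref{Lemma033}. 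With that bound in hand, your first computation (low frequency via $L^2$, Bernstein plus Lemma \ref{Lem26} with $p=r_0>\frac{2}{\alpha+\beta-1}$ for the high frequencies) closes verbatim; without it, neither of your two routes can be completed.
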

\begin{proof}[\textbf{Proof of Lemma \ref{Lemma035}}]
Using the Bernstein inequality and choosing $r_{0}\in
(\frac{2}{\alpha+\beta-1},\,\infty)$, it is clear that
\begin{eqnarray}\label{t342}
\|\omega\|_{L_{t}^{1}B_{\infty,1}^{0}}&\leq& \|G\|_{L_{t}^{1}
B_{\infty,1}^{0}}+\|\mathcal
{R}_{\alpha}\theta\|_{L_{t}^{1}B_{\infty,1}^{0}}\nonumber\\&\leq&
C(t)+\|\Delta_{-1}\mathcal
{R}_{\alpha}\theta\|_{L_{t}^{1}L^{\infty}}+ \sum_{k\in
\mathbb{N}}2^{(1-\alpha-\beta+\frac{2}{r_{0}})k}2^{\beta
k}\|\Delta_{k}\theta\|_{L_{t}^{1}L^{r_{0}}}\nonumber\\&\leq&
C(t)+\big(\sum_{k\in
\mathbb{N}}2^{(1-\alpha-\beta+\frac{2}{r_{0}})k}\big) \sup_{k\in
\mathbb{N}}2^{\beta
k}\|\Delta_{k}\theta\|_{L_{t}^{1}L^{r_{0}}}\nonumber\\&\leq&
C(t)+C\sup_{k\in \mathbb{N}}2^{\beta
k}\|\Delta_{k}\theta\|_{L_{t}^{1}L^{r_{0}}},
\end{eqnarray}
where we have used the following fact
$$\|\Delta_{-1}\mathcal {R}_{\alpha}\theta\|_{L_{t}^{1}L^{\infty}}\leq C\|\Delta_{-1}\Lambda^{-\alpha}\theta\|_{L_{t}^{1}L^{\frac{2}{1-\alpha}}}\leq C\|\theta\|_{L_{t}^{1}L^{2}}\leq
C(t).$$ Again, the estimate (\ref{t206}) ensures
\begin{eqnarray}\label{t343}\sup_{k\in \mathbb{N}}2^{\beta k}
\|\Delta_{k}\theta\|_{L_{t}^{1}L^{r_{0}}}\leq
C(\|\theta_{0}\|_{L^{r_{0}}}+\|\theta_{0}\|_{L^{\infty}}
\|\omega\|_{L_{t}^{1}L^{r_{0}}}).
\end{eqnarray}
By the estimate (\ref{t342}), we obtain
\begin{eqnarray}\label{t344}
\|\omega\|_{L_{t}^{1}L^{r_{0}}}&\leq&
\|G\|_{L_{t}^{1}L^{r_{0}}}+\|\mathcal
{R}_{\alpha}\theta\|_{L_{t}^{1}L^{r_{0}}}\nonumber\\&\leq&
\|G\|_{L_{t}^{1}B_{\infty,1}^{0}\cap L^{2}}+\|\mathcal {R}_{\alpha}
\theta\|_{L_{t}^{1}B_{r_{0},1}^{0}}\nonumber\\&\leq&
C(t)+Ct^{1-\frac{1}{\rho}}\|\mathcal {R}_{\alpha}\theta
\|_{\widetilde{L}_{t}^{\rho}B_{r_{0},1}^{0}}\quad (\rho>1).
\end{eqnarray}
An argument similar to that used in the proof of (\ref{t329}) yields
\begin{eqnarray}\label{t345}
\|\mathcal
{R}_{\alpha}\theta\|_{\widetilde{L}_{t}^{\rho}B_{r_{0},1}^{0}}&\leq&
\|\Delta_{-1}\mathcal
{R}_{\alpha}\theta\|_{\widetilde{L}_{t}^{\rho}B_{r_{0},1}^{0}}
+\|(\mathbb{I}-\Delta_{-1})\theta\|_{\widetilde{L}_{t}^{\rho}B_{r_{0},1}
^{1-\alpha}} \nonumber\\&\leq& C\|\theta\|_{L_{t}^{\rho}L^{r_{0}}}+
\|(\mathbb{I}-\Delta_{-1})\theta\|_{\widetilde{L}_{t}^{\rho}B_{r_{0},\infty}
^{\frac{\beta}{\rho}}}\quad
\Big(\frac{\beta}{\rho}>1-\alpha\Big)\nonumber\\&\leq&
C(t)+\sup_{k\in
\mathbb{N}}2^{k\frac{\beta}{\rho}}\|\Delta_{k}\theta\|_{L_{t}^{\rho}L^{r_{0}}}\nonumber\\&\leq&
C(t)+C(\|\theta_{0}\|_{L^{r_{0}}}+\|\theta_{0}\|_{L^{\infty}}
\|\omega\|_{L_{t}^{1}L^{r_{0}}}).
\end{eqnarray}
By the iterative process as used in proving Lemma \ref{Lemma033}, we
thus get
$$\|\omega\|_{L_{t}^{1}L^{r_{0}}}\leq C(t)<\infty,$$
which along with (\ref{t343}) guarantees that
$$\sup_{k\in \mathbb{N}}2^{\beta k}\|\Delta_{k}\theta\|_{L_{t}^{1}L^{r_{0}}}\leq C(t)<\infty.$$
Thus, we conclude the desired bound (\ref{tNew008}). This ends the
proof of Lemma \ref{Lemma035}.
\end{proof}

Bearing in mind the bound (\ref{tNew008}) and the Littlewood-Paley
technique, it is clear that
\begin{eqnarray}
\int_{0}^{T}{\|\nabla u(\tau)\|_{L^{\infty}}\,d\tau}\leq C
\int_{0}^{T}{(\|u(\tau)\|_{L^{2}}+\|\omega(\tau)\|_{B_{\infty,1}^{0}})\,d\tau}<\infty.\nonumber
\end{eqnarray}
The above estimate is sufficient for us to get the desired results
of Theorem \ref{Th1} (see for example \cite{C1,D,YZ}). As a result, we finish the proof of Theorem
\ref{Th1}.

\vskip .4in
\appendix
\section{details about a choice of $p$}
In this appendix, we will give the details that a choice of $p$ is possible.
As a matter of fact, $p$ would work as long as all the following conditions hold
\begin{eqnarray}\label{A001}\frac{2m}{3m-4}<
\frac{2}{\alpha+\eta-\beta+s}\Rightarrow s<\frac{(3+\beta-\alpha-\eta)m-4}{m},\end{eqnarray}
\begin{eqnarray}\label{A002}\frac{2m}{3m-4}<
\frac{4(1-s)\beta+2\alpha s}{(6-\alpha-\frac{8}{m})(1-s)\beta+\alpha s(1+\alpha+\eta-\beta)}\Rightarrow s<\frac{\beta m}{(\alpha+\eta-2)m+4},\end{eqnarray}
\begin{eqnarray}\label{A003}\frac{2s}
{2s+\alpha+\eta-\beta-1}<
\frac{2m}{(2-\alpha)(m-2)+m}\Rightarrow s>\frac{(1+\beta-\alpha-\eta)m}{m-(2-\alpha)(m-2)},\end{eqnarray}
\begin{eqnarray}\label{A004}\frac{2s}
{2s+\alpha+\eta-\beta-1}<
\frac{2}{\alpha+\eta-\beta+s}\Rightarrow s>1+\beta-\alpha-\eta,\end{eqnarray}
\begin{eqnarray}\label{A005}&&\frac{2s}
{2s+\alpha+\eta-\beta-1}<
\frac{4(1-s)\beta+2\alpha s}{(6-\alpha-\frac{8}{m})(1-s)\beta+\alpha s(1+\alpha+\eta-\beta)}\nonumber\\&&\Rightarrow s>\frac{2(1+\beta-\alpha-\eta)\beta}{\alpha(\alpha+\eta-1)+(\frac{8}{m}-2)\beta}.\end{eqnarray}
Noting the above estimates and $s<\beta$, if the following requirement is true, then the number $s$ would work
\begin{eqnarray}\label{A006}\mathcal{\underline{S}}<s<
\mathcal{\overline{S}}
,\end{eqnarray}
where $$\mathcal{\underline{S}}=\max\Big\{\frac{(1+\beta-\alpha-\eta)m}{m-(2-\alpha)(m-2)},\,\,
\frac{2(1+\beta-\alpha-\eta)\beta}{\alpha(\alpha+\eta-1)+(\frac{8}{m}-2)\beta}\Big\},$$
$$\mathcal{\overline{S}}=\min\Big\{\beta,\,\,\frac{(3+\beta-\alpha-\eta)m-4}{m},\,\,\frac{\beta m}{(\alpha+\eta-2)m+4}\Big\}.$$
We now further assume $\eta$ satisfying
$$\eta<3-\alpha-\frac{4}{m},$$
which implies
$$\frac{(3+\beta-\alpha-\eta)m-4}{m}>\beta,\qquad \frac{\beta m}{(\alpha+\eta-2)m+4}>\beta.$$
We thus get
$$\mathcal{\overline{S}}=\beta.$$
Direct computations yield
\begin{eqnarray}\label{A007}
\frac{(1+\beta-\alpha-\eta)m}{m-(2-\alpha)(m-2)}<
\beta\Rightarrow \eta>\frac{(1+\beta-\alpha)m-[2(2-\alpha)-(1-\alpha)m]\beta}{m},
\end{eqnarray}
\begin{eqnarray}\label{A008}
\frac{2(1+\beta-\alpha-\eta)\beta}{\alpha(\alpha+\eta-1)+(\frac{8}{m}-2)\beta}<
\beta\Rightarrow \eta>\frac{2(1+\beta-\alpha)+\alpha(1-\alpha)-(\frac{8}{m}-2)\beta}{2+\alpha}.
\end{eqnarray}
It thus gives
\begin{eqnarray} \beta<\eta<\min\Big\{1+\beta-\alpha,\,\,\frac{\alpha}{2},\,\,3-\alpha-\frac{4}{m}\Big\}
.\nonumber
\end{eqnarray}
It is obvious to find that $\eta$ should satisfy
$$\mathcal{\underline{A}}<\eta<\mathcal{\overline{A}},$$
where $\mathcal{\underline{A}}$ and $\mathcal{\overline{A}}$ are given by
\begin{eqnarray} \mathcal{\underline{A}}&=&\max\Big\{\beta,\,\,\frac{(1+\beta-\alpha)m-[2(2-\alpha)-(1-\alpha)m]\beta}{m}
,\,\,\nonumber\\&& \qquad   \qquad\frac{2(1+\beta-\alpha)+\alpha(1-\alpha)-(\frac{8}{m}-2)\beta}{2+\alpha}
\Big\},\nonumber
\end{eqnarray}
\begin{eqnarray} \mathcal{\overline{A}}=\min\Big\{1+\beta-\alpha,
\,\,\frac{\alpha}{2},\,\,3-\alpha-\frac{4}{m}\Big\}.\nonumber
\end{eqnarray}
Then, it is easy to check that $\eta$ would work provided that
$$\beta<3-\alpha-\frac{4}{m}\Rightarrow m>\frac{4}{3-\alpha-\beta},$$
$$\frac{(1+\beta-\alpha)m-[2(2-\alpha)-(1-\alpha)m]\beta}{m}<1+\beta-\alpha\Rightarrow
m<\frac{2(2-\alpha)}{1-\alpha},$$
$$\frac{2(1+\beta-\alpha)+\alpha(1-\alpha)-(\frac{8}{m}-2)\beta}{2+\alpha}<1+\beta-\alpha
\Rightarrow m<\frac{8}{2-\alpha}.$$
$$\frac{(1+\beta-\alpha)m-[2(2-\alpha)-(1-\alpha)m]\beta}{m}<\frac{\alpha}{2}\Rightarrow
\big(2(2-\alpha)\beta-3\alpha+2\big)m<4(2-\alpha)\beta,$$
$$\frac{2(1+\beta-\alpha)+\alpha(1-\alpha)-(\frac{8}{m}-2)\beta}{2+\alpha}<\frac{\alpha}{2}
\Rightarrow (4+8\beta-4\alpha-3\alpha^{2})m<16\beta,$$
$$\frac{(1+\beta-\alpha)m-[2(2-\alpha)-(1-\alpha)m]\beta}{m}<3-\alpha-\frac{4}{m}\Rightarrow
m>2,$$
$$\frac{2(1+\beta-\alpha)+\alpha(1-\alpha)-(\frac{8}{m}-2)\beta}{2+\alpha}<3-\alpha-\frac{4}{m}
\Rightarrow m>2.$$
Recall the following restriction
$$m\leq\frac{2}{2-2\alpha+\widetilde{\delta}},\quad \widetilde{\delta}>0,$$
and notice the fact that $\widetilde{\delta}>0$ can be arbitrarily small,
then the $m$ only needs to meet the following condition
$$m<\frac{1}{1-\alpha}.$$
Finally, we get that $m$ should satisfy
\begin{equation}\label{A009}
\left\{\aligned
&\frac{4}{3-\alpha-\beta}<m<\min\Big\{4,\,\,\frac{1}{1-\alpha}\Big\}, \\
&\big(2(2-\alpha)\beta-3\alpha+2\big)m<4(2-\alpha)\beta,\\
&(4+8\beta-4\alpha-3\alpha^{2})m<16\beta.
\endaligned\right.
\end{equation}
Therefore, under the above restriction (\ref{A009}) on $m$, a choice of $p$ is possible.

\vskip .4in

\end{document}